\DeclareMathAlphabet\mathbfcal{OMS}{cmsy}{b}{n}
\newtheorem{theorem}{Theorem}[section]
\newtheorem{lemma}[theorem]{Lemma}
\newtheorem{proposition}[theorem]{Proposition}
\theoremstyle{definition}
\newtheorem{remark}[theorem]{Remark}
\newcolumntype{P}[1]{>{\centering\arraybackslash}p{#1}}
 \DeclareMathOperator{\Tr}{tr}
\newcommand\makebig[2]{%
  \@xp\newcommand\@xp*\csname#1\endcsname{\bBigg@{#2}}%
  \@xp\newcommand\@xp*\csname#1l\endcsname{\@xp\mathopen\csname#1\endcsname}%
  \@xp\newcommand\@xp*\csname#1r\endcsname{\@xp\mathclose\csname#1\endcsname}%
}
\renewcommand{\vec}[1]{\mbox{\boldmath$#1$}}
\newcommand{\dif}{\mathrm{d}}
\newcommand{\im}{\mathrm{i}}
\newcommand{\IH}{{\text{IH}}}
\newcommand{\MH}{{\text{MH}}}
\begin{document}
\title{Minimal Object Characterisations using Harmonic Generalised Polarizability Tensors and Symmetry Groups}

\author{P.D. Ledger$^\dagger$  and W.R.B. Lionheart$^\ddagger$\\
$^\dagger$School of Computing \& Mathematics, Keele University, \\
Keele, Staffordshire ST5 5BG United Kingdom.\\
$^\ddagger$Department of Mathematics, The University of Manchester,\\
Oxford Road, Manchester M13 9PL, United Kingdom.\\
Corresponding author: p.d.ledger@keele.ac.uk}
\date{Submitted 14th January 2022, Revised 1st August 2022}

\maketitle

\section*{Abstract}
We introduce a new type of  object characterisation, which is capable of accurately describing small isolated inclusions for potential field inverse problems such as in electrostatics, magnetostatics and related low frequency Maxwell problems. Relevant applications include characterising ferrous unexploded ordnance (UXO) from magnetostatic field measurements in magnetometry, describing small conducting inclusions for medical imaging using electrical impedance tomography (EIT), performing geological ground surveys using electrical resistivity imaging (ERT), characterising objects by electrosensing fish to navigate and identify food  as well as describing the effective properties of dilute composites. Our  object characterisation builds on the generalised polarizability tensor (GPT) object  characterisation concept and provides an alternative to the compacted GPT (CGPT). We call the new characterisations  harmonic GPTs (HGPTs) as their coefficients correspond to  products of harmonic polynomials. Then, we show that the number of independent coefficients of HGPTs needed to characterise objects can be significantly reduced by considering the symmetry group of the object and propose a systematic approach
for determining the  
subspace of symmetric harmonic polynomials that is fixed by the group and its dimension. This enable us to determine the independent HGPT coefficients for different symmetry groups.
 
 \noindent{\bf Keywords:} Inverse problems, generalised polarizability tensor, object characterisation, symmetry groups, magnetometry, electrical impedance tomography. 

\noindent {\bf MSC Classification:} 35R30; 35B30; 20C30

\section{Introduction}

The purpose of this paper is to introduce a new type of  object characterisation, which is capable of accurately describing small isolated inclusions for potential field inverse problems such as in electrostatics, magnetostatics and related low frequency Maxwell problems. This is important for magnetometry, which uses variations in the earth's magnetic field caused by the presence of hidden ferrous objects to distinguish between unexploded ordnance (UXOs) and metallic shrapnel as well as identify archaeological features. Further applications include: finding ferrous objects from metal detection measurements at very low frequencies (where only the magnetic part of the characterisation can be recorded);  describing the effective properties of dilute composites and characterising small conducting inclusions for applications in electrical impedance tomography (EIT)~\cite{billeit} and electrical resistivity imaging (ERT)~\cite{billert}
as well as characterising objects by electrosensing fish in order to navigate and identify food~\cite{fishbill}.
 EIT offers possibilities for low-cost non-invasive medical imaging such as in lung monitoring. Here, the electrical conductivity, permittivity, and impedance of a part of the body is inferred from surface electrode measurements and used to form a tomographic image. Related to EIT is ERT, which is a geophysical technique for imaging sub-surface structures from electrical resistivity measurements made at the surface. In a similar way, weakly electric fish generate electric current and use hundreds of voltage sensors on the surface of their body to navigate and locate food.  Experiments have shown that they can discriminate between differently shaped conducting or insulating objects by using electrosensing~\cite{fishexp}.
 
Our object characterisation builds on the generalized polarizability tensor (GPT) object characterisation concept developed by Ammari and Kang~\cite{ammarikangbook} and their coworkers. The simplest form of GPT is a rank 2 (P\'oyla-Szeg\"o) tensor, which describes the shape and material contrast of the object by the best fitting ellipsoid, while the complete set of GPTs uniquely defines both the shape and material contrast of the object~\cite{ammarikangbook}[pg. 90]. However, the additional information provided by higher order GPTs  remains open.
To help to address this, we provide an alternative to their compacted GPT (CGPT) object description in two dimensions~\cite{ammari2012} and three dimensions~\cite{ammari2013shape}, in which perturbed field measurements are expressed as sums of products of  CGPT coefficients and spherical harmonics. We propose an alternative object description called a harmonic GPT (HGPT) where the perturbed field can be described in terms of expansion involving HGPT coefficients and products of harmonic polynomials.
HGPTs have the same dimension of CGPTs and they both provide a significant reduction in the number of coefficients needed to describe an object compared to GPTs of the same degree. 
For objects with rotational or reflectional symmetries, the number of independent coefficients is much smaller in all cases.

By grouping an object according to their symmetry group class, we show that a systematic approach can be developed for determining the  
subspace of symmetric harmonic polynomials that are fixed by the group, and its dimension. This allows us to deduce the HGPT coefficients, of a given degree, which are invariant under the action of the set of orthogonal matrices making up the symmetry group. Then, by considering HGPTs upto a certain order, we can find objects of a certain cyclic (or dihedral) group, hence, contributing to understanding the additional information that higher order (H)GPTs provide. Furthermore, by fixing classes associated with different symmetry groups, the sets of invariant HGPT coefficients offer  alternative features to the shape invariant descriptors based on CGPTs proposed by Ammari, Chung, Kang and Wang~\cite{ammari2013shape}  for dictionary based object classification.
 We also review the related work of Meyer~\cite{meyer} who describes an alternative approach to the one advocated in this paper for determining the subspace of harmonic polynomials that are fixed by a symmetry group. We make the historical note that Burnett Meyer acknowledged in~\cite{meyer} that George P\'olya, his PhD advisor at Stanford, suggested the problem of invariant harmonic polynomials. The first term in the GPT, the rank-2 tensor, was first introduced by P\'olya and Szeg\"o in their 1951 book~\cite{polya}.  Historians of mathematics may be interested to investigate if the connection was accidental or  points to a deeper insight.
 
The paper is organised as follows: In Section~\ref{sect:prelim} we define the mathematical problem that will be our focus  in this paper and collect together some observations about spherical harmonics and harmonic polynomials. Then, in Section~\ref{sect:object}, we review the concepts of GPTs and CGPTs and introduce our new HGPTs. Section~\ref{sect:hgptprop} presents transformation formulae for HGPTs. Next, in Section~\ref{sect:sym}, we describe how  knowledge of the symmetry group of an object can be used  to determine the  
subspace of symmetric harmonic polynomials that is fixed by the group and its dimension. This, in turn,  allows us to determine the independent coefficients of HGPTs for objects associated with different symmetry groups. In this section, we also review the related work of Meyer on  determining the subspace of harmonic polynomials that are fixed by a symmetry group as well as providing tables of symmetric harmonic polynomials fixed by different groups. Finally, some examples of our approach are included for different groups.

\section{Preliminaries}\label{sect:prelim}
\subsection{Problem Definition} \label{sect:define}
The problem of interest in this work is that described in Section 4.1 of~\cite{ammarikangbook}, which we briefly summarise below. We let $B$ be a bounded Lipschitz domain in ${\mathbb R}^3$, and the material contrast of $B$  be $k$. In the case of magnetostatics, and in magnetometry, $k$ can be understood as a real valued contrast involving the magnetic susceptibility and permeability, while in electrostatics, and in related low frequency Maxwell problems such as in EIT~\cite{billeit}, ERT~\cite{billert} and electrosensing~\cite{fishbill}, $k$ is a complex contrast involving the  permittivity, frequency and conductivity. For simplicity, we consider the case of real valued $k$, with $0< k \ne 1< + \infty$, and use $\lambda: = (k+1)/(2 (k-1))$
 in the following. We suppose that the origin $O \in B$ and let $U$ be a harmonic (background) field in ${\mathbb R}^3$ and let $u$ be the solution to
\begin{subequations} \label{eqn:model}
\begin{align}
\nabla \cdot ( (  (k-1) \chi(B))\nabla u) & =0 &&\text{in ${\mathbb R}^3$}, \\
(u - U)({\vec x}) & =O(|{\vec x}|^{-2}) && \text{as $|{\vec x}| \to \infty$},
\end{align}
\end{subequations}
where $\chi(B)$ is $1$ in $B$ and $0$ outside. Our interest lies in describing $(u-U)({\vec x}) $ for the purpose of object characterisations. In magnetostatics, $\nabla_{\vec x} (u-U)({\vec x}) $ corresponds to the perturbation in magnetic field caused by the presence of the inclusion, while in electrostatics,  $\nabla_{\vec x} (u-U)({\vec x}) $ corresponds to the perturbed electric field.

\subsection{Spherical harmonics and harmonic polynomials}\label{sect:spharmoics}

This section summarise some key results about spherical harmonics and harmonic polynomials  that are relevant for what follows. For further details see~\cite{ammari2013shape,nedelecbook}. Given a direction $(\theta,\psi)$ in spherical coordinates, 
the (complex) spherical harmonics of homogeneous degree $n$ and order $m$, with $-n \le m \le n$, are given by
\begin{equation}
Y_n^m(\theta, \psi) = (-1)^m \left [
\frac{2n+1}{4 \pi} \frac{(n-m)!}{(n+m)! } \right ]^{1/2} e^{\im m \psi  } P_n^m( \cos \theta),
\end{equation}
where $P_n^m$ are the associated Legendre polynomials of degree $n$ and order $m$.  The result
\begin{equation}
P_n( \cos \gamma )  = \frac{4 \pi }{2 n+1} \sum_{m=-n}^n Y_n^m(\theta,\phi) \overline{  Y_n^m(\theta',\phi') } , \label{eqn:addsph}
\end{equation}
is known as the addition formula, where $\cos \gamma = \cos \theta \cos \theta'+ \sin \theta \sin \theta' \cos (\phi - \phi')$,  $P_n(x)$ are the Legendre polynomials of degree $n$ and the overbar denotes the complex conjugate. Note that $P_n^m(x)$  is related to $P_n(x)$ by $P_n^m (x) = (-1)^m (1-x^2)^{m/2} \frac{\dif^m}{\dif x^m} ( P_n(x))$.
It is well known that 
\begin{equation}
H_n^m({\vec x}) = r^n Y_n^m(\theta, \psi) ,
\end{equation}
are homogenous harmonic functions where $(r,\theta,\psi)$, with $r=|{\vec x}|$, is the description of ${\vec x}$ in spherical coordinates. As well as being harmonic, these functions are smooth at the origin and tend to infinity at infinity. Interestingly, the functions
\begin{equation}
K_n^m({\vec x}) = \frac{1}{r^{n+1}} Y_n^m(\theta, \psi)  = \frac{1}{r^{2n+1}} H_n^m({\vec x}),
\end{equation}
are also harmonic, but are discontinuous at the origin and tend to zero at infinity~\cite{nedelecbook}[pg. 40]. The $2n+1$ harmonic functions $H_n^m({\vec x})$ of degree $n$ can be expressed in terms of a basis of real valued harmonic polynomials $I_n^{\ell}  ({\vec x})$ using
\begin{equation}
H_n^m({\vec x}) = \sum_{\ell = -n}^{n} a_{\ell m}^{\IH}  I_{n}^\ell ({\vec x}), \label{eqn:himpoly}
\end{equation}
which has $2n +1$ terms, an expansion that is smaller than the dimension {$(n+1)(n+2)/2$} of the standard monomial expansion $\sum_{\beta, |\beta | =n} a_\beta {\vec x}^\beta$ of the same degree {for $n \ge 2$}. Here, $\beta = (\beta_1,\beta_2,\beta_3)$ denotes a multi-index with  ${\vec x}^ \beta = x_1^{\beta_1}  x_2^{\beta_2}  x_3^{\beta_3} $, $\beta! = \beta_1! \beta_2! \beta_3 !$ and $\partial_{\vec x}^\beta (\cdot) = \partial_{x_1}^{\beta_1} \partial_{x_2}^{\beta_2} \partial_{x_3}^{\beta_3} (\cdot)$. The harmonic functions  $H_n^m({\vec x}) $ can be expressed as linear combinations of  ${\vec x}^\beta$ using
\begin{equation}
H_n^m({\vec x}) = \sum_{\beta, |\beta|=n} a_{\beta m  }^{\MH}  {\vec x}^\beta. \label{eqn:hinmon}
\end{equation}
Normalising $H_n^m({\vec x})$ such that the orthogonality property $\left < H_n^m({\vec x}), H_n^k({\vec x}) \right >_S= \delta_{m k}$ holds, where $\delta_{mk}$ is the Kronecker delta and $\left <u,v \right >_S=\int_S u \overline{v} \dif {\vec x}$ is the $L^2$ inner product over the surface of the unit sphere, and fixing $I_{n}^m ({\vec x})$ so that $\left < I_{n}^m ({\vec x}), I_{n}^k ({\vec x}) \right >_S= \delta_{mk}$, it follows that $\sum_{\ell=-n}^n a_{\ell m }^{\IH} \overline{a_{\ell k}^{\IH}} = \delta_{mk}$ and, hence,
the map from $ I_{\ell}^m ({\vec x})$  to  $H_n^m({\vec x}) $ is injective  with
\begin{equation}
I_n^\ell ({\vec x}) = \sum_{m = -n}^{n} \overline{a_{ \ell m }^{\IH} } H_{n}^m ({\vec x}). 
\end{equation}
On the other hand, the map from ${\vec x}^\beta$ to $H_n^m({\vec x})$ is not injective. Illustrative choices of $I_n^\ell ({\vec x}) $ for different degrees $n$ are presented in Table~\ref{tab:harmonicpolys}. The basis in this table does not satisfy $\left < I_{n}^m ({\vec x}), I_{n}^k ({\vec x}) \right >_S = \delta_{mk}$, but for the practical computations we will consider in Section~\ref{sect:sym}, this will not be required. An alternative basis satisfying $\left < I_{n}^m ({\vec x}), I_{n}^k ({\vec x}) \right >_S =  \delta_{mk}$ is provided in Table~\ref{tab:orthharmonicpolys} and a general approach for determining such a basis is presented by Karachik~\cite{karachik}.

\begin{table}
\begin{center}
\begin{tabular}{|c|c|l|}
\hline
$n$ & $2n+1$ & $I_n^\ell ({\vec x}) $ \\
\hline
0  & 1 & 1\\
\hline
1 & 3 & $x_1$, $x_2$, $x_3$\\
\hline
2 & 5 & $x_1^2 -x_2^2 $,  $x_1^2 -x_3^2,$\\
 {} & {} & $x_1 x_2$, $x_1 x_3,$\\
 {} & {} & $x_2 x_3$ \\
 \hline 
 3 & 7 & $x_1^3 -3 x_1 x_2^2  $, $ x_2^3 - 3x_1^2 x_2,$\\
 {} & {} & $ x_1^3  - 3 x_1 x_3^2 $, $  x_3^3 - 3 x_1^2 x_3, $\\
 {} & {} & $ x_2^3  - 3 x_2 x_3^2 $,  $  x_3^3 - 3  x_2^2 x_3 , $\\
{} & {} & $ x_1 x_2 x_3 $ \\
\hline
4 & 9 & $x_1^4 -6x_1^2 x_2^2  +x_2^4$, $x_1^4-6x_1^2 x_3^2 +x_3^4$,\\
{} & {} & $x_2^4-6x_2^2 x_3^2 +x_3^4$ , $x_1^3 x_2 - x_1 x_2^3$, \\
{} & {} &  $x_1^3 x_3 - x_1 x_3^3$,  $x_2^3 x_3 - x_2 x_3^3$, \\
{} & {} &  $3 x_1^2 x_2 x_3 - x_2 x_3^3$, $3 x_1 x_2^2 x_3 - x_1 x_3^3$, \\
{} & {} &   $3 x_1 x_2 x_3^2 - x_2 x_1^3$ \\
\hline
 \end{tabular}
\end{center}
\caption{Illustrative harmonic polynomials $I_n^\ell ({\vec x}) $  of different degrees $n$. Note that for this choice of basis $\left < I_{n}^m ({\vec x}), I_{n}^k ({\vec x}) \right >_S\ne \delta_{mk}$.} \label{tab:harmonicpolys}
\end{table}

\begin{table}
\begin{center}
\begin{tabular}{|c|c|l|}
\hline
$n$ & $2n+1$ & $I_n^\ell ({\vec x}) $ \\
\hline
0  & 1 & $\frac{1}{2\sqrt{\pi}}$ \\
\hline
1 & 3 & $\frac{1}{2}\sqrt{\frac{3}{\pi}}x_1$, $\frac{1}{2}\sqrt{\frac{3}{\pi}}x_2$, $\frac{1}{2}\sqrt{\frac{3}{\pi}}x_3$\\
\hline
2 & 5 & $\frac{1}{2} \sqrt{\frac{15}{\pi}} x_1 x_2$, $\frac{1}{2} \sqrt{\frac{15}{\pi}} x_2 x_3,$\\
 {} & {} &  $\frac{1}{2} \sqrt{\frac{15}{\pi}} x_1 x_3$, $\frac{1}{4} \sqrt{\frac{5}{\pi}} (x_1^2 -2 x_2^2 +x_3^2),$ \\
 {} & {} & $\frac{1}{4} \sqrt{\frac{15}{\pi}}  (x_1^2-x_3^2)$ \\
 \hline 
 3 & 7 & $ \frac{1}{4} \sqrt{\frac{35}{2 \pi}} (x_1^3 -3 x_1 x_2^2)$, $ \frac{1}{4} \sqrt{\frac{35}{2 \pi}} ( -3 x_1^2 x_2 +x_2^3),$ \\
 & & $  \frac{1}{4} \sqrt{\frac{21}{2 \pi}} x_1 (x_1^2 +x_2^2 -4 x_3^2 )$, $ \frac{1}{4} \sqrt{\frac{35}{2 \pi}} ( -3 x_1^2 x_3 +x_3^3 ),$\\
 & & $ \frac{1}{4} \sqrt{\frac{21}{2 \pi}} x_2 (x_1^2 +x_2^2 - 4  x_3^2)$, $ \frac{1}{4} \sqrt{\frac{21}{2 \pi}} x_3 (x_1^2 -4 x_2^2 + x_3^2),$\\
 & & $ \frac{1}{2} \sqrt{\frac{105}{ \pi}} x_1 x_2 x_3 $\\
\hline
4 & 9 & $\frac{3}{16} \sqrt{\frac{35}{\pi}} (x_1^4 -6 x_1^2 x_2^2 +x_2^4)$, $\frac{1}{16} \sqrt{\frac{5}{\pi}} ( 7 x_1^4 -x_2^4 +8x_3^4 +6x_1^2(x_2^2 -8x_3^2)),$\\
& &  $\frac{1}{4\sqrt{\pi}} ( -x_1^4 +4 x_2^4 -27 x_2^2 x_3^2 +4 x_3^4 +3x_1^2 ( x_2^2 +x_3^2))$, $ \frac{3}{4} \sqrt{\frac{35}{\pi}} x_1 x_2 (x_1^2 -x_2^2)$,\\
& & $ \frac{3}{4} \sqrt{\frac{35}{\pi}} x_1 x_3 (x_1^2 -x_3^2)$ , $ \frac{3}{4} \sqrt{\frac{35}{\pi}} x_2 x_3 (x_2^2 -x_3^2)$,\\
& & $ -\frac{3}{4} \sqrt{\frac{5}{\pi}} x_2 x_3 (-6x_1^2 + x_2^2 +x_3^2)$ , $ -\frac{3}{4} \sqrt{\frac{5}{\pi}} x_1 x_3 (x_1^2 -6x_2^2 +x_3^2)$ ,\\
& & $- \frac{3}{4} \sqrt{\frac{5}{\pi}} x_1 x_2 (x_1^2 +x_2^2-6x_3^2)$ \\
\hline
 \end{tabular}
\end{center}
\caption{Illustrative harmonic polynomials $I_n^\ell ({\vec x}) $  of different degrees $n$, which satisify $\left < I_{n}^m ({\vec x}), I_{n}^k ({\vec x}) \right >_S= \delta_{mk}$.} \label{tab:orthharmonicpolys}
\end{table}
\newpage
\section{Object Characterisation using GPTs, CGPTs and HGPTs} \label{sect:object}
 \subsection{Spherical and Taylor series expansions of $G({\vec x}, {\vec x}')$}
 By the addition formula for spherical harmonics (\ref{eqn:addsph}),  it can be shown that the Laplace free space Green's function $G({\vec x},{\vec x}') := 1/ ( 4 \pi | {\vec x} - {\vec x}'|)$ can be expressed as
 \begin{align}
 G({\vec x},{\vec x}')  = &  \sum_{n=0}^\infty  \frac{ |{\vec x}'|^n}{|{\vec x}|^{n+1}} \frac{1}{2n+1} \sum_{m=-n}^n Y_n^m(\theta,\phi) \overline{Y_n^m( \theta', \phi')}  \nonumber \\
 = &  \sum_{n=0}^\infty   \frac{1}{2n+1} \sum_{m=-n}^n K_n^m({\vec x}) \overline{H_n^m({\vec x}')} \label{eqn:gharmonic},
   \end{align}
for $|{\vec x }'|  < |{\vec x }| $. From the properties of $H_n^m({\vec x}')$ and $K_n^m({\vec x})$, we observe this expression is harmonic with respect to ${\vec x}'$ and ${\vec x}$, respectively. Furthermore, $ G({\vec x},{\vec x}') $ can be expressed in terms of  real valued harmonic polynomials as
\begin{align}
 G({\vec x},{\vec x}')  = & \sum_{n=0}^\infty   \frac{1}{2n+1} \frac{1}{{|\vec x }|^{2n+1}} \sum_{m=-n}^n  \sum_{\ell' = -n}^{n}  \sum_{\ell = -n}^{n}\overline{ a_{\ell ' m} ^{\IH}} I_{n }^{\ell'} ({\vec x}')  a_{\ell m} ^{\IH} I_{n }^{\ell} ({\vec x}) \nonumber \\
 = & \sum_{n=0}^\infty   \frac{1}{2n+1}  \frac{1}{{|\vec x }|^{2n+1}}    \sum_{\ell = -n}^{n}  I_{n }^{\ell} ({\vec x}') {I_{n}^{\ell} ({\vec x})} \nonumber ,
  \end{align}
since $ \sum_{m=-n}^n \overline{a_{\ell ' m} ^{\IH} }  a_{\ell m} ^{\IH}  =\delta_{\ell' \ell}$.

Alternatively, using (\ref{eqn:hinmon}) in (\ref{eqn:gharmonic}) gives
\begin{equation}
 G({\vec x},{\vec x}')  = \sum_{\beta, |\beta| =0}^ \infty  \frac{1}{2|\beta| +1} 
\sum_{m=-|\beta|}^{ | \beta|} K_{|\beta|}^m({\vec x})  \overline{a_{\beta m}^{\MH}} ({\vec x}')^\beta,
\end{equation}
  and, by comparing with the Taylor's series expansion  
    \begin{equation}
 G({\vec x},{\vec x}')  = \sum_{\beta, |\beta| =0}^ \infty  \frac{(-1)^{|\beta|}}{\beta !} \partial_{\vec x}^\beta G({\vec x},{\vec 0}) ( {\vec x}')^\beta \label{eqn:taylorg},
\end{equation}
   for $| {\vec x}'|$ in a compact set  and as  $|{\vec x}| \to \infty$, then
\begin{align}
 \frac{1}{2|\beta| +1} 
\sum_{m=-|\beta|}^{ | \beta|} K_{|\beta|}^m({\vec x}) \overline{ a_{\beta m}^{\MH} }= & \frac{(-1)^{|\beta|}}{\beta !} \partial_{\vec x}^\beta G({\vec x},{\vec 0})  \label{eqn:taylortoh}.
\end{align}

\subsection{Asymptotic expansion, GPTs and CGPTs}
For the problem stated in (\ref{eqn:model}), an  asymptotic expansion of  $(u - U)({\vec x})$ as $|{\vec x}| \to \infty$, has been derived by Ammari and Kang in their Definition 4.1~\cite{ammarikangbook}[pg 77]  and takes the form
\begin{equation}
(u - U)({\vec x})= \sum_{\alpha,\beta, |\alpha|=|\beta|=1}^\infty \frac{(-1)^{|\alpha|}} {\alpha! \beta !} \partial_{\vec x}^\alpha G({\vec x},{\vec 0}) M_{\alpha \beta } \partial^\beta U({\vec 0}) ,
\end{equation}
 for positions ${\vec x}$ away from  an inclusion $B$
where the generalised polarizability/polarisation tensor (GPT) coefficients that characterise $B$ are given by
\begin{equation}
M_{\alpha  \beta } : = \int_{\partial B} {\vec y}^\alpha \phi_\beta ({\vec y}) \dif {\vec y} , \qquad \phi_\beta ({\vec y}) := ( \lambda I - K_B^*)^{-1} ( {\vec \nu}_{\vec x} \cdot \nabla ({\vec x})^\beta ) ({\vec y}) ,\qquad  {\vec y} \in \partial B , \label{eqn:gpt}
\end{equation}
where $K_B^*$ denotes the $L^2$-adjoint of the Neumann-Poincar\'e operator $K_B$~\cite{ammarikangbook}[(2.20), pg. 18].

Consider the situation where the background field can be modelled {as} a point source located at the  position ${\vec x}^{\text{s}}$, so that  $U({\vec x}) = G({\vec x},{\vec x}^{\text{s}})$, and let $(u - U)({\vec x})$ be evaluated at position ${\vec x}^{\text{r}}$, far from the object. In this case, the measurements $V_{\text{sr}}$, corresponding to pairs of different sources and receivers, given by
\begin{equation}
V_{\text{sr}} =  \sum_{\alpha,\beta, |\alpha|=|\beta|=1}^\infty \frac{(-1)^{|\alpha| +|\beta|  }} {\alpha! \beta !} ( \partial_{\vec x}^\alpha G({\vec x},{\vec 0}))({\vec x}^{\text{r}}) M_{\alpha \beta } (\partial_{\vec x}^\beta  G({\vec x},{\vec 0}))({\vec x}^{\text{s}}),
\end{equation}
are of interest.
Then, using (\ref{eqn:taylortoh}),
\begin{align}
V_{\text{sr}} = &  \sum_{\alpha,\beta, |\alpha|=|\beta|=1}^\infty \frac{  1  } { (2|\alpha| +1)  (2|\beta| +1) } \sum_{m=-|\alpha|}^{ | \alpha|}  \sum_{n=-|\beta|}^{ | \beta|}
 K_{|\alpha|}^m({\vec x}^{\text{r}}) \overline{ a_{\alpha m}^{\MH}} M_{\alpha \beta} {a}_{\beta n}^{\MH}
   \overline{K_{|\beta|}^n({\vec x}^{\text{s}})  } \nonumber \\
    = & \sum_{p,q=1}^\infty   \sum_{m=-p }^{ p }  \sum_{n=-q}^{q }
 K_{p}^m({\vec x}^{\text{r}})  M_{qnpm}^{\text{C}}
  \overline{ K_{q}^n({\vec x}^{\text{s}})},     \label{eqn:exapandcgpt}
\end{align}
where, since right hand side of (\ref{eqn:taylortoh}) is real, the complex conjugate of both sides can be taken and is applied to $(\partial_{\vec x}^\beta  G({\vec x},{\vec 0}))({\vec x}^{\text{s}})$. In the above,
\begin{equation}
M_{qnpm}^{\text{C}} =   \sum_{\alpha, |\alpha|=p} \sum_{\beta, |\beta|=q}\frac{1}{ (2|\alpha| +1)  (2|\beta| +1)}  \overline{a_{\alpha m}^{\MH} }M_{\alpha \beta} {a}_{\beta n}^{\MH} ,
\end{equation}
are equivalent to the contracted GPT (CGPT) coefficients defined by Ammari,  Chung, Kang and Wang
\cite{ammari3dinvgpt}
and are expressed in terms of linear combinations of the GPT coefficients $M_{\alpha \beta}$.

Following Ammari {\it et al}~\cite{ammari3dinvgpt}, the matrices
\begin{equation}
({\mathbf M}_{pq})_{mn} := M_{qnpm}^{\text{C}}, \qquad -p \le m \le p, -q \le n \le q,
\end{equation}
are introduced, which are of dimension $(2p+1)\times (2q+1)$. We also introduce the $(2p+1) \times 1$ and $(2q+1)\times 1$ matrices ${\mathbf Y}_{\text{r}p}$ and ${\mathbf Y}_{\text{s}q}$ with entries
\begin{align}
({\mathbf Y}_{\text{r}p}  )_m:= & K_p^m ({\vec x}^{\text{r}}) ,\qquad -p \le m \le p,  \nonumber \\
({\mathbf Y}_{\text{s}q} ) _n:= & K_q^n ({\vec x}^{\text{s}}) ,  \qquad -q \le n \le q.  \nonumber 
\end{align}
Then, after truncating (\ref{eqn:exapandcgpt}) according to $p >N$ and $q > N$,
\begin{equation}
V_{\text{sr}} = \sum_{{p,q}=1}^N {\mathbf Y}_{\text{r}p}  {\mathbf M}_{pq} ({\mathbf Y}_{\text{s}q} )^*,
\end{equation}
where $*$ denotes the complex conjugate transpose $\overline{(\cdot)}^t$. Still further, Ammari {\em et al} introduce the block matrices ${\mathbf M}$ and ${\mathbf Y}$ with elements ${\mathbf M}_{ln}$  and $ {\mathbf Y}_{\text{r}p} $. In their Proposition 3.1 they show that ${\mathbf M}$ is hermitian and ${\mathbf M}_{nn}$ invertible for $n\ge 1$.

\subsection{Harmonic GPTs (HGPTs)}

A further alternative description of $V_{\text{sr}}$ is offered by using (\ref{eqn:himpoly}) so that
\begin{align}
V_{\text{sr}} = &  \sum_{p,q=1}^\infty  \frac{1}{{|{\vec x}^{\text{r}}|^{2p+1}} {|{\vec x}^{\text{s}}|^{2q+1}} } \sum_{m=-p }^{ p }  \sum_{n=-q}^{q }
 {H_{p}^m({\vec x}^{\text{r}})} M_{qnpm}^{\text{C}}
\overline{{H_{q}^n({\vec x}^{\text{s}})}}  \nonumber \\
= &  \sum_{p,q=1}^\infty   \frac{1}{{|{\vec x}^{\text{r}}|^{2p+1}} {|{\vec x}^{\text{s}}|^{2q+1}} }   \sum_{i=-p}^{ p }  \sum_{j=-q}^{q }
 {I_{p}^i({\vec x}^{\text{r}})}  M_{qjpi}^{\text{H}}
  {I_{q}^j ({\vec x}^{\text{s}})}  ,  \label{eqn:hproducts}
   \end{align}
where
\begin{subequations}
\begin{align}
 M_{qjpi}^{\text{H}} = &  \sum_{m=-p }^{ p }  \sum_{n=-q}^{q } a_{i m} ^{\IH} M_{qnpm}^{\text{C}} \overline{a_{jn}^{\IH}} \label{eqn:hgpt1} \\
= &   \sum_{m=-p }^{ p }  \sum_{n=-q}^{q }  \sum_{\alpha, |\alpha|=p} \sum_{\beta, |\beta|=q}\frac{1}{ (2|\alpha| +1)  (2|\beta| +1)}   a_{i m} ^{\IH} \overline{a_{\alpha m}^{\MH} }M_{\alpha \beta} {a}_{\beta n}^{\MH} \overline{a_{jn}^{\IH}}, \label{eqn:hgpt}
\end{align}
\end{subequations}
are the coefficients of what we call Harmonic GPTs (HGPTs). Note that since ${I_{q}^j ({\vec x}^{\text{s}})} = \overline{{I_{q}^j ({\vec x}^{\text{s}})}}$  the coefficients $M_{qjpi}^\text{H}$ are real for real $k$.
The HGPTs have $(2p+1) (2q+1)$ coefficients, the same  number as  the  CGPTs, but, as we will see, the HGPTs allow significant reductions in the number of independent coefficients for objects associated with a particular symmetry group.

In a similar way to ${\mathbf M}_{pq}$, the matrices ${\mathbf N}_{pq}$ with coefficients
\begin{equation}
({\mathbf N}_{pq})_{mn} := M_{qnpm}^{\text{H}}, \qquad -p \le m \le p, -q \le n \le q,
\end{equation}
are introduced, which are of dimension $(2p+1)\times (2q+1)$, and we call  HGPT matrices.
 We define the coefficients of  ${\mathbf I}_{\text{r} p}$ and ${\mathbf I}_{\text{s} q}$ as
\begin{align}
({\mathbf I}_{\text{r}p})_m:= & I_p^m ({\vec x}^{\text{r}}) , \qquad  -p \le m \le p, \nonumber \\
({\mathbf I}_{\text{s} q})_n:= & I_q^n ({\vec x}^{\text{s}})  ,  \qquad  -q \le n \le q,\nonumber 
\end{align}
so that, after truncating (\ref{eqn:hproducts}) corresponding to $p >N$ and $q > N$,
\begin{equation}
V_{\text{sr}} = \sum_{{p,q}=1}^N   \frac{1}{{|{\vec x}^{\text{r}}|^{2p+1}} {|{\vec x}^{\text{s}}|^{2q+1}} }  {\mathbf I}_{\text{r} p}  {\mathbf N}_{pq} ({\mathbf I}_{\text{s} q}  )^t.\label{eqn:Vsrharmonic}
\end{equation}
Block matrices ${\mathbf N}$ and ${\mathbf I}$ can also be introduced, with entries ${\mathbf N}_{pq}$ and ${\mathbf I}_{rp}$, in a similar way to ${\mathbf M}$ and ${\mathbf Y}$.
Then, in a similar manner to Proposition 3.1 in~\cite{ammari3dinvgpt}, we prove the following

\begin{proposition} \label{sect:propsymhgpt}
The HGPT matrix ${\mathbf N}$ satisfies   ${\mathbf N}= {\mathbf N}^*={\mathbf N}^t$. Furthermore, the matrices  ${\mathbf N}_{pp} $ are invertible for $p \ge 1$.
\end{proposition}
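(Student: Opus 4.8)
The plan is to realise the HGPT block matrix $\mathbf{N}$ as a unitary conjugate of the CGPT block matrix $\mathbf{M}$, and then to transport the hermiticity and the block-diagonal invertibility established in Proposition 3.1 of~\cite{ammari3dinvgpt} through that conjugation; the extra information that $\mathbf{N}$ is not merely hermitian but symmetric will come from the reality of the HGPT coefficients already observed just after~\eqref{eqn:hgpt}.

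First I would package the change of basis~\eqref{eqn:himpoly} into matrices. For each degree $p\ge 1$ let $B_p$ be the $(2p+1)\times(2p+1)$ matrix with entries $(B_p)_{im}:=a_{im}^{\IH}$, $-p\le i,m\le p$. The normalisation fixed in Section~\ref{sect:spharmoics}, namely $\sum_{\ell} a_{\ell m}^{\IH}\,\overline{a_{\ell k}^{\IH}}=\delta_{mk}$, says exactly that the columns of $B_p$ are orthonormal, i.e. $B_p^{*}B_p=\mathbf{I}$; since $B_p$ is square this makes $B_p$ unitary (this is the matrix form of the injectivity of the map $I_p^{\ell}\mapsto H_p^{m}$ noted in the text, sharpened by the orthonormalisation). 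Reading off~\eqref{eqn:hgpt1}, using $(\mathbf{M}_{pq})_{mn}=M^{\mathrm{C}}_{qnpm}$ and $\overline{a_{jn}^{\IH}}=(B_q^{*})_{nj}$, one gets $(\mathbf{N}_{pq})_{ij}=M^{\mathrm{H}}_{qjpi}=\sum_{m,n}(B_p)_{im}(\mathbf{M}_{pq})_{mn}(B_q^{*})_{nj}$, that is, $\mathbf{N}_{pq}=B_p\,\mathbf{M}_{pq}\,B_q^{*}$. Assembling the $B_p$ into the block-diagonal (hence unitary) matrix $B:=\mathrm{diag}(B_1,B_2,\dots)$, this is the single identity $\mathbf{N}=B\,\mathbf{M}\,B^{*}$.

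Hermiticity is then immediate from Proposition 3.1 of~\cite{ammari3dinvgpt}: $\mathbf{N}^{*}=(B\mathbf{M}B^{*})^{*}=B\mathbf{M}^{*}B^{*}=B\mathbf{M}B^{*}=\mathbf{N}$. For $\mathbf{N}=\mathbf{N}^{t}$ I would invoke the remark below~\eqref{eqn:hgpt} that $M^{\mathrm{H}}_{qjpi}\in\mathbb{R}$ when $k$ is real, so that $\overline{\mathbf{N}}=\mathbf{N}$, whence $\mathbf{N}^{t}=\overline{\mathbf{N}}^{\,t}=\mathbf{N}^{*}=\mathbf{N}$. (If one prefers not to rely on that remark, reality can be argued directly: $V_{\mathrm{sr}}$ is a real measurement, the $I_p^{i}$ are real-valued, and the expansion~\eqref{eqn:Vsrharmonic} in the linearly independent products $I_p^{i}(\vec x^{\mathrm{r}})\,I_q^{j}(\vec x^{\mathrm{s}})$ is unique, forcing real coefficients.) Finally, invertibility of $\mathbf{N}_{pp}$ for $p\ge 1$ follows from $\mathbf{N}_{pp}=B_p\,\mathbf{M}_{pp}\,B_p^{*}$, a product of invertible matrices, since $B_p$ is unitary and $\mathbf{M}_{pp}$ is invertible by Proposition 3.1 of~\cite{ammari3dinvgpt}.

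I do not expect a genuine obstacle here; the only step requiring care is the bookkeeping in passing from~\eqref{eqn:hgpt1} to the honest matrix product $B_p\,\mathbf{M}_{pq}\,B_q^{*}$ — in particular checking that the complex conjugate $\overline{a_{jn}^{\IH}}$ lands as the $(n,j)$ entry of $B_q^{*}$, and that the stated normalisation is read as unitarity of $B_p$ rather than as orthonormality with respect to some other pairing. Once $\mathbf{N}=B\mathbf{M}B^{*}$ with $B$ unitary is in hand, the proposition reduces to Proposition 3.1 of~\cite{ammari3dinvgpt} together with the reality of the coefficients.
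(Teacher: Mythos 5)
Your proof is correct, but it takes a genuinely different route from the paper's. The paper proves ${\mathbf N}={\mathbf N}^*={\mathbf N}^t$ by unwinding $M^{\text{H}}_{qjpi}$ all the way back to the GPT coefficients $M_{\alpha\beta}$ and invoking the GPT symmetry theorem (Theorem 4.10 of~\cite{ammarikangbook}) together with the reality of the coefficients for real $k$; and it proves invertibility of ${\mathbf N}_{pp}$ by re-running the quadratic-form argument of~\cite{ammari3dinvgpt} in the harmonic-polynomial basis, rewriting ${\mathbf v}^t{\mathbf N}_{pp}{\mathbf v}$ as a boundary integral of harmonic polynomials and concluding it is positive for $\lambda>1/2$ and negative for $\lambda\le 1/2$. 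You instead establish the congruence ${\mathbf N}=B{\mathbf M}B^*$ — which is exactly the matrix form of (\ref{eqn:hgpt1}) and, up to the paper's index and conjugation conventions, the relation (\ref{eqn:ntom}) recorded in Section~\ref{sect:hgptprop} — and transport hermiticity and block invertibility directly from Proposition 3.1 of~\cite{ammari3dinvgpt}, adding symmetry via the reality of the $M^{\text{H}}_{qjpi}$. Your bookkeeping is right: the stated normalisation $\sum_{\ell}a^{\IH}_{\ell m}\overline{a^{\IH}_{\ell k}}=\delta_{mk}$ does make $B_p$ unitary, and $(B_p{\mathbf M}_{pq}B_q^*)_{ij}$ does reproduce (\ref{eqn:hgpt1}). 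What your approach buys is brevity and a clean separation of the new content (the change of basis) from the cited CGPT result; it is also more robust than you claim, since congruence by a merely invertible $B_p$ already preserves hermiticity and invertibility, so the orthonormalised basis of Table~\ref{tab:orthharmonicpolys} is not essential. What the paper's longer argument buys is the stronger conclusion that ${\mathbf N}_{pp}$ is sign-definite (with sign determined by $\lambda$), not just invertible, and a proof that does not treat Proposition 3.1 of~\cite{ammari3dinvgpt} as a black box for the invertibility step.
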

\begin{proof}
Given, $({\mathbf N}_{pq})_{ij} := M_{qjpi}^{\text{H}}$ then

\begin{align}
 M_{qjpi}^{\text{H}} = &  \sum_{m=-p }^{ p }  \sum_{n=-q}^{q } a_{i m} ^{\IH} M_{qnpm}^{\text{C}} \overline{a_{jn}^{\IH} }\nonumber \\
= &  \sum_{m=-p }^{ p }  \sum_{n=-q}^{q }  \sum_{\alpha, |\alpha|=p} \sum_{\beta, |\beta|=q}\frac{1}{ (2|\alpha| +1)  (2|\beta| +1)}   a_{i m} ^{\IH} \overline{a_{\alpha m}^{\MH}} M_{\alpha \beta} {a}_{\beta n}^{\MH} \overline{a_{jn}^{\IH}}  \nonumber \\
=&   \overline{
 \sum_{m=-p }^{ p }  \sum_{n=-q}^{q }  \sum_{\alpha, |\alpha|=p} \sum_{\beta, |\beta|=q}\frac{1}{ (2|\alpha| +1)  (2|\beta| +1)}   \overline{a_{i m} ^{\IH}} {{a}}_{\alpha m}^{\MH} M_{ \beta \alpha} \overline{a_{\beta n}^{\MH}} {a}_{jn}^{\IH}
}
=\overline{M_{piqj}^{\text{H}}}
\nonumber \\
={M_{piqj}^{\text{H}}},
\end{align}
which follows from using the symmetry property of $M_{\alpha \beta}$ on the coefficients of harmonic polynomials~\cite{ammarikangbook}, Thm. 4.10], and noting that the coefficients of $ M_{qjpi}^{\text{H}}$ are real for real $k$. Thus, ${\mathbf N}^* = {\mathbf N}^t$.

Again following~\cite{ammari3dinvgpt}, to show the invertibility of  ${\mathbf N}_{pp} $, it suffices to show that ${\mathbf v}^t {\mathbf N}_{pp} {\mathbf v} \ne 0 $ for any vector ${\mathbf v} \in {\mathbb R}^{2p+1}$, $ {\mathbf v} \ne 0$. Noting that we only need {to} consider the case of real ${\mathbf v}$ as   ${\mathbf N}_{pp} $ is real we get
\begin{align}
{\mathbf v}^t {\mathbf N}_{pp} {\mathbf v} = &  \sum_{i=-p}^p \sum_{j=-p}^p {v}_{i+p+1} M_{pjpi}^{\text{H}} v_{j+p+1} \nonumber \\
=& \sum_{i=-p}^p \sum_{j=-p}^p {v}_{i+p+1}  
   \sum_{m=-p }^{ p }  \sum_{n=-p}^{p }  \sum_{\alpha, |\alpha|=p} \sum_{\beta, |\beta|=p}\frac{1}{ (2|\alpha| +1)  (2|\beta| +1)}   a_{i m} ^{\IH} \overline{a_{\alpha m}^{\MH} } M_{\alpha \beta} {a}_{\beta n}^{\MH} \overline{a_{jn}^{\IH}} v_{j+p+1} \nonumber \\
 =&  \sum_{i=-p}^p \sum_{j=-p}^p {v}_{i+p+1}  
  \frac{1 }{(2p+1)^2}  \sum_{m=-p }^{ p }  \sum_{n=-p}^{p }  \sum_{\alpha, |\alpha|=p} \sum_{\beta, |\beta|=p}   a_{i m} ^{\IH} \overline{a_{\alpha m}^{\MH} } \int_{\partial B} {\vec y}^\alpha  \phi_\beta ({\vec y}) \dif {\vec y} {a}_{\beta n}^{\MH} \overline{a_{jn}^{\IH} } v_{j+p+1} \nonumber \\
  =&  \sum_{i=-p}^p \sum_{j=-p}^p {v}_{i+p+1}  
  \frac{ 1 }{(2p+1)^2}  \sum_{m=-p }^{ p }  \sum_{n=-p}^{p }     a_{i m} ^{\IH} \nonumber\\
  & \cdot  \int_{\partial B} \overline{H_p^m({\vec y})} ( \lambda I - K_B^*)^{-1} ( {\vec \nu}_{\vec x} \cdot \nabla ( H_p^n({\vec x}) ({\vec y}))  \dif {\vec y} \overline{a_{jn}^{\IH} }v_{j+p+1}\nonumber \\
  =&  \sum_{i=-p}^p \sum_{j=-p}^p {v}_{i+p+1}  
  \frac{1  }{ (2p  +1) ^2}       \int_{\partial B} I_p^i({\vec y}) ( \lambda I - K_B^*)^{-1} ( {\vec \nu}_{\vec x} \cdot \nabla ( I_p^j({\vec x}) ({\vec y})  )\dif {\vec y}  v_{j+p+1}\nonumber ,
  \end{align}
where $ {v}_{i+p+1}   I_p^i({\vec y})$ is a harmonic polynomial. Then, proceeding in a similar manner to Proposition 3.1 in~\cite{ammari3dinvgpt}, we find ${\mathbf v}^t {\mathbf N}_{pp} {\mathbf v} > 0 $ if $\lambda > 1/2$ and  ${\mathbf v}^t {\mathbf N}_{pp} {\mathbf v} < 0 $ if $\lambda \le 1/2$, completing the proof. 
\end{proof}

\begin{remark} \label{remark:indhgpt} {Proposition~\ref{sect:propsymhgpt} also implies that for the case of $p=q$ the number of independent coefficients of HGPTs and CGPTs reduce to $((2p+1)^2+(2p+1))/2 =(2p+1)(p+1)$, which is clearly less than $(2p+1)^2$.}
\end{remark}

\section{Transformation formulae for the HGPT matrix} \label{sect:hgptprop}

Following the results derived by~\cite{ammari3dinvgpt}, we present results for the scaling, shifting and rotation of the HGPT matrices. It is useful to introduce the $(p+1)\times (p+1) $ matrix with entries
\begin{equation}
({\mathbf A}_p^{\IH})_{mn} := a_{nm}^{\IH}, \qquad -p \le n \le p, -p \le m \le p , 
\end{equation}
which, by the results in Section~\ref{sect:spharmoics}, is unitary if $H_n^m({\vec x}) $ is chosen such that $ \left < H_n^m({\vec x}), H_n^k({\vec x}) \right >_S= \delta_{m k}$ and we fix  $I_{n}^m({\vec x})$ so that $\left < I_{n}^m ({\vec x}), I_{n}^k ({\vec x}) \right >_S= \delta_{mk}$, so that we can write 
\begin{align}
{\mathbf M}_{pq} = ({\mathbf A}_p^{\IH})({\mathbf N}_{pq} ) ({\mathbf A}_q^{\IH})^* , \label{eqn:mton}
\end{align}
and
\begin{align}
{\mathbf N}_{pq} = ({\mathbf A}_p^{\IH})^*({\mathbf M}_{pq} ) ({\mathbf A}_q^{\IH}) .\label{eqn:ntom}
\end{align}
If  a different choice of $H_n^m({\vec x}) $ is made then $({\mathbf A}_q^{\IH})^* $ should be replaced by $({\mathbf A}_q^{\IH})^{-1}$.

\subsection{Scaling}
\begin{lemma}
For any positive integers $\ell,n$ and the scaling parameter $s > 0$, the following holds:
\begin{equation}
{\mathbf N}_{\ell n } (sB) = s^{\ell + n+1} {\mathbf N}_{\ell n }(B) . \nonumber
\end{equation}
\end{lemma}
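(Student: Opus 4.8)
The plan is to reduce the statement to the behaviour of the single boundary integral that represents an HGPT coefficient under the dilation ${\vec y}\mapsto s{\vec y}$. First I would record the integral form of a single entry of $\mathbf{N}_{\ell n}$, obtained by exactly the algebra used at the end of the proof of Proposition~\ref{sect:propsymhgpt}: collapsing the sums over $m,n$ and over the multi-indices $\alpha,\beta$ in (\ref{eqn:hgpt}) against the object-independent coefficients $a^{\IH}$ and $a^{\MH}$ turns each coefficient into
\[
(\mathbf{N}_{\ell n}(B))_{ij} = \frac{1}{(2\ell+1)(2n+1)} \int_{\partial B} I_\ell^i({\vec y})\, \psi_n^j({\vec y})\, \dif {\vec y}, \qquad \psi_n^j := (\lambda I - K_B^*)^{-1}\!\left( {\vec \nu}_{\vec x} \cdot \nabla I_n^j({\vec x}) \right),
\]
using that each basis element $I_\ell^i$, $I_n^j$ is a genuine harmonic polynomial (degrees $\ell$ and $n$ respectively).

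Next I would parametrise $\partial(sB)$ by ${\vec y} = s{\vec z}$ with ${\vec z}\in\partial B$ and track the homogeneity of every factor. The surface measure obeys $\dif{\vec y} = s^2\,\dif{\vec z}$; homogeneity of degree $\ell$ gives $I_\ell^i(s{\vec z}) = s^{\ell} I_\ell^i({\vec z})$; the outward unit normal is dilation invariant, ${\vec \nu}_{s{\vec z}} = {\vec \nu}_{\vec z}$, and $\nabla I_n^j$ is homogeneous of degree $n-1$, so the Neumann data ${\vec \nu}\cdot\nabla I_n^j$ picks up a factor $s^{n-1}$; and, crucially, the Neumann--Poincar\'e operator is scale invariant in the sense that $K_{sB}^*[\phi(\cdot/s)](s{\vec z}) = (K_B^*[\phi])({\vec z})$, because its kernel $\langle {\vec y}-{\vec y}', {\vec \nu}_{\vec y}\rangle/|{\vec y}-{\vec y}'|^3$ together with $\dif\sigma({\vec y}')$ is invariant under simultaneous scaling of ${\vec y},{\vec y}'$. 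Hence $(\lambda I - K_{sB}^*)^{-1}$ applied to the scaled Neumann data returns $s^{n-1}$ times the value of $\psi_n^j$ at ${\vec z}$. Collecting exponents, $s^{\ell}\cdot s^{n-1}\cdot s^{2} = s^{\ell+n+1}$, which yields $(\mathbf{N}_{\ell n}(sB))_{ij} = s^{\ell+n+1}(\mathbf{N}_{\ell n}(B))_{ij}$ entrywise, i.e.\ the claimed identity.

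A shorter alternative, which I would mention but not adopt as the main proof, bypasses the integral entirely: it is classical (Ammari--Kang~\cite{ammarikangbook}) that the GPT coefficients satisfy $M_{\alpha\beta}(sB) = s^{|\alpha|+|\beta|+1}M_{\alpha\beta}(B)$ in ${\mathbb R}^3$; since $\mathbf{M}_{pq}$ is assembled from the $M_{\alpha\beta}$ with $|\alpha|=p$, $|\beta|=q$ through coefficients independent of $B$, and since $\mathbf{N}_{pq} = (\mathbf{A}_p^{\IH})^*\mathbf{M}_{pq}(\mathbf{A}_q^{\IH})$ by (\ref{eqn:ntom}) with $B$-independent $\mathbf{A}^{\IH}$, the homogeneity $s^{p+q+1}$ passes immediately to $\mathbf{N}_{pq}$. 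Either way, the one genuine point is the dilation invariance of the Neumann--Poincar\'e operator (equivalently, the scaling of the single-layer potential by the expected power of $s$); everything else is degree counting. I would present the direct argument in full, since it is self-contained and mirrors the style of the proof of Proposition~\ref{sect:propsymhgpt}.
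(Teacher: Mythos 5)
Your proposal is correct, but your main argument takes a genuinely different route from the paper. The paper's proof is a one-line reduction: it invokes Lemma 4.1 of Ammari \emph{et al.}~\cite{ammari3dinvgpt}, which gives ${\mathbf M}_{\ell n}(sB)=s^{\ell+n+1}{\mathbf M}_{\ell n}(B)$ for the CGPT matrices, and then observes via (\ref{eqn:hgpt1}) that the HGPT coefficients are $B$-independent linear combinations of the CGPT coefficients, so the homogeneity passes through unchanged. That is precisely the ``shorter alternative'' you sketch at the end (modulo routing through the CGPTs rather than the raw GPTs $M_{\alpha\beta}$ --- an immaterial difference, since both intermediate objects scale the same way and both linear maps are $B$-independent). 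Your primary argument --- writing each entry of ${\mathbf N}_{\ell n}$ as a single boundary integral of the form $\int_{\partial B} I_\ell^i({\vec y})\,(\lambda I-K_B^*)^{-1}({\vec \nu}\cdot\nabla I_n^j)({\vec y})\,\dif{\vec y}$ and counting degrees under ${\vec y}\mapsto s{\vec y}$ --- is self-contained and makes visible where each power of $s$ comes from ($s^{\ell}$ from the test polynomial, $s^{n-1}$ from the Neumann data, $s^{2}$ from the surface measure), with the dilation invariance of $K_B^*$ correctly identified as the one nontrivial ingredient; the exponent bookkeeping $s^{\ell+n+1}$ checks out. What the paper's route buys is brevity and consistency with its general strategy of transferring all transformation formulae (scaling, shifting, rotation) from the CGPT results of~\cite{ammari3dinvgpt} through the fixed change of basis; what your direct route buys is independence from the external lemma and a template that also yields Proposition~\ref{sect:propsymhgpt}-style manipulations.
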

\begin{proof}
The result follows immediately from Lemma 4.1 in~\cite{ammari3dinvgpt} and (\ref{eqn:hgpt1}).
\end{proof}

\subsection{Shifting}
The shifting result in Lemma 4.2 of Ammari {\em et al.}~\cite{ammari3dinvgpt}, which we repeat below, concerns the shifting of CGPTs:
\begin{lemma}[Ammari {\em et al}~\cite{ammari3dinvgpt}]
For any positive integers $\ell,n$, and the shifting parameter ${\vec z}$, the following result holds:
\begin{equation}
{\mathbf M}_{\ell n }(B_z) = \sum_{i=1}^\ell \sum_{\nu=1}^n \overline{{\mathbf G}_{\ell i}({\vec z})} {\mathbf M}_{i \nu} {\mathbf G}_{n \nu}({\vec z})^t,
\end{equation}
where ${\mathbf G}_{n \nu } ({\vec z})$ is defined in (4.4) of Ammari {\em et al.}~\cite{ammari3dinvgpt}.
\end{lemma}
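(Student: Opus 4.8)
The statement is the translation (shifting) formula for CGPTs, and it is quoted verbatim from~\cite{ammari3dinvgpt}; the plan I would follow to reconstruct it exploits the fact that the measurement $V_{\text{sr}}$ in~(\ref{eqn:exapandcgpt}) is intrinsic to the inclusion and does not depend on the point about which one expands. I would compute $V_{\text{sr}}$ for the translated inclusion $B_z = {\vec z}+B$ in two ways and compare. On one hand, applying the definition~(\ref{eqn:gpt})--(\ref{eqn:exapandcgpt}) after the change of variables ${\vec x}\mapsto{\vec x}-{\vec z}$ — which only shifts the inclusion back to the origin and leaves its CGPTs equal to those of $B$ — gives
\[
V_{\text{sr}} = \sum_{p,q\ge 1} {\mathbf Y}_{\text{r}p}({\vec x}^{\text{r}}-{\vec z})\,{\mathbf M}_{pq}(B)\,\bigl({\mathbf Y}_{\text{s}q}({\vec x}^{\text{s}}-{\vec z})\bigr)^*,
\]
where the entries of ${\mathbf Y}_{\text{r}p}({\vec x}^{\text{r}}-{\vec z})$ are the irregular solid harmonics $K_p^m({\vec x}^{\text{r}}-{\vec z})$, and similarly for the source. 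On the other hand, expanding the field of $B_z$ about the origin directly yields $V_{\text{sr}} = \sum_{p,q\ge 1} {\mathbf Y}_{\text{r}p}({\vec x}^{\text{r}})\,{\mathbf M}_{pq}(B_z)\,({\mathbf Y}_{\text{s}q}({\vec x}^{\text{s}}))^*$.

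The bridge between the two is the translation addition theorem for the $K_p^m$. Writing $K_i^m = r^{-2i-1}H_i^m$ and using~(\ref{eqn:hinmon}) to express $K_i^m$ as a constant-coefficient differential operator in the ${\vec x}$-variables applied to $1/|{\vec x}|$, then differentiating the Green's-function expansion~(\ref{eqn:gharmonic}) (equivalently, using~(\ref{eqn:addsph})), one obtains, for $|{\vec x}|>|{\vec z}|$,
\[
K_i^m({\vec x}-{\vec z}) = \sum_{\ell\ge i}\ \sum_{k=-\ell}^{\ell} \overline{\bigl({\mathbf G}_{\ell i}({\vec z})\bigr)_{km}}\,K_\ell^k({\vec x}),
\]
with coefficient matrices ${\mathbf G}_{\ell i}({\vec z})$ whose entries are (normalised) regular solid harmonics of ${\vec z}$ of degree $\ell-i$; these are exactly the matrices of~(4.4) in~\cite{ammari3dinvgpt}. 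The point to note is that $\partial_{\vec x}^\beta$ \emph{raises} the degree of a $K$-harmonic by $|\beta|$, so only $\ell\ge i$ occurs. Consequently, substituting this expansion for both ${\mathbf Y}_{\text{r}p}({\vec x}^{\text{r}}-{\vec z})$ and ${\mathbf Y}_{\text{s}q}({\vec x}^{\text{s}}-{\vec z})$ in the $B$-centred expression, rearranging the double series, and collecting the coefficient of $K_\ell^{k}({\vec x}^{\text{r}})\overline{K_n^{j}({\vec x}^{\text{s}})}$, the surviving original indices satisfy $p\le\ell$ and $q\le n$, which produces the \emph{finite} sums in the statement. Since ${\vec x}^{\text{r}},{\vec x}^{\text{s}}$ range over an open set and the $K_\ell^k$ are linearly independent, matching coefficients with the origin-centred expression gives ${\mathbf M}_{\ell n}(B_z) = \sum_{i=1}^\ell\sum_{\nu=1}^n \overline{{\mathbf G}_{\ell i}({\vec z})}\,{\mathbf M}_{i\nu}(B)\,{\mathbf G}_{n\nu}({\vec z})^t$; the conjugate on the receiver block and the transpose on the source block are precisely what the $(\cdot)^*$ attached to ${\mathbf Y}_{\text{s}q}$ in~(\ref{eqn:exapandcgpt}) produces once the addition theorem is inserted inside it.

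A more algebraic variant, which sidesteps solid-harmonic manipulations, is to start from the shifting formula for the uncontracted GPTs $M_{\alpha\beta}$ (Ammari--Kang,~\cite{ammarikangbook}), which expresses $M_{\alpha\beta}(B_z)$ as a finite ${\vec z}$-polynomial combination of the $M_{\gamma\delta}(B)$, and to substitute it into $M^{\text{C}}_{qnpm} = \sum_{|\alpha|=p}\sum_{|\beta|=q}\frac{1}{(2p+1)(2q+1)}\overline{a^{\MH}_{\alpha m}}M_{\alpha\beta}a^{\MH}_{\beta n}$; regrouping the ${\vec z}$-dependent constants together with the $a^{\MH}$ coefficients then recovers the matrices ${\mathbf G}_{\ell i}({\vec z})$. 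Either way, the only real obstacle is the combinatorial bookkeeping: pinning down the admissible index ranges (that the degree-raising structure of the addition theorem forces $i\le\ell$ and $\nu\le n$) and tracking which factor carries the conjugate and which the transpose. The analytic content — absolute convergence and term-by-term rearrangement of the multipole series in the region $|{\vec x}^{\text{r}}|,|{\vec x}^{\text{s}}| > |{\vec z}|+\mathrm{diam}(B)$ — is routine because $B$ is bounded. As the result is taken directly from~\cite{ammari3dinvgpt}, in the paper a citation suffices; the above is how one would verify it.
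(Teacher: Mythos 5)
The paper does not prove this lemma at all: it is quoted verbatim from Ammari \emph{et al.}~\cite{ammari3dinvgpt} with a citation only, and is then used as a black box (via (\ref{eqn:mton})--(\ref{eqn:ntom})) to transfer the shifting property to HGPTs. So there is no in-paper argument to compare yours against; you correctly recognise that a citation suffices here.

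Your reconstruction of how the result is established in the cited source is sound and matches the standard route: compute $V_{\text{sr}}$ for $B_z$ once via an expansion centred at ${\vec z}$ (which carries the CGPTs of $B$) and once centred at the origin (which carries the CGPTs of $B_z$), insert the multipole-to-multipole translation $K_i^m({\vec x}-{\vec z})=\sum_{\ell\ge i}\sum_k c_{\ell k,im}({\vec z})K_\ell^k({\vec x})$, and match coefficients using linear independence of the $K_\ell^k$. Your key structural observation --- that translation of an irregular solid harmonic only \emph{raises} the degree, so the sums over $i$ and $\nu$ terminate at $\ell$ and $n$ --- is exactly what makes the right-hand side a finite sum, and the convergence/rearrangement issues are indeed routine for bounded $B$ with $|{\vec x}^{\text{r}}|,|{\vec x}^{\text{s}}|$ large. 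The only genuinely unresolved point in your sketch is which block carries the complex conjugate and which the transpose; that cannot be pinned down without the explicit definition of ${\mathbf G}_{n\nu}({\vec z})$ in (4.4) of~\cite{ammari3dinvgpt}, which neither the paper nor your proposal reproduces, and you flag this honestly as bookkeeping rather than papering over it. Your alternative route via the shifting formula for the uncontracted $M_{\alpha\beta}$ followed by contraction with the $a^{\MH}$ coefficients is equally valid and arguably closer in spirit to how the paper manipulates CGPTs elsewhere.
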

By using (\ref{eqn:mton}) and (\ref{eqn:ntom}), the above result can be applied to understand the shifting of HGPTs. 

\subsection{Rotation}
Defining a general rotation matrix ${\mathbf R}$ in terms of the Euler angles $\gamma,\beta, \alpha$ for rotations about the $x_1$, $x_2$ and $x_3$ axes, respectively, as
\begin{align}
{\mathbf R}= \left ( 
\begin{array}{ccc}
\cos \gamma & - \sin \gamma & 0 \\
\sin \gamma & \cos \gamma & 0 \\
0 & 0 & 1 \end{array} \right )
\left ( \begin{array}{ccc} 
\cos \beta & 0 & - \sin \beta \\
0 & 1 & 0 \\
\sin \beta & 0 & \cos \beta \\
\end{array} \right )
\left ( \begin{array}{ccc}
\cos \alpha & - \sin \alpha & 0 \\
\sin \alpha & \cos \alpha & 0 \\
0 & 0 & 1\end{array} 
\right ).
\end{align}

Ammari {\em et al.}~\cite{ammari3dinvgpt} in their  in Lemma 4.3, repeated below, describe how their CGPT transform under object rotation.

\begin{lemma}[Ammari {\em et al}~\cite{ammari3dinvgpt}] \label{lemma:rotten}
For a orthogonal matrix ${\mathbf R}$, the following relation holds
\begin{equation}
{\mathbf M}_{\ell n} (B_{\mathbf R}) = \overline{{\mathbf Q}_\ell ({\mathbf R})} {\mathbf M}_{\ell n} (B) {\mathbf Q}_n({\mathbf R})^t ,\nonumber
\end{equation}
where ${\mathbf Q}_n({\mathbf R})$ is called  a Wigner D-matrix.  ${\mathbf Q}_n({\mathbf R})$ is defined in (4.11) of~\cite{ammari3dinvgpt}.
\end{lemma}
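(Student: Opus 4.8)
The final statement to prove is Lemma~\ref{lemma:rotten}, which is attributed to Ammari \emph{et al.} and concerns the transformation of the CGPT matrices under object rotation. Since it is stated as a cited result, the natural proof is to reconstruct the argument from the GPT transformation law together with the relationship between CGPTs and GPTs.

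The plan is to start from the known rotation behaviour of the classical GPT coefficients $M_{\alpha\beta}$ under an orthogonal transformation ${\mathbf R}$, which follows by changing variables ${\vec y}\mapsto{\mathbf R}{\vec y}$ in the boundary-integral definition~(\ref{eqn:gpt}): the layer potential operator $K_B^*$ is covariant under rigid motions, so $\phi_\beta$ for $B_{\mathbf R}$ is obtained from that for $B$ by rotating the argument, and the monomial ${\vec y}^\alpha$ transforms into a linear combination of degree-$|\alpha|$ monomials with coefficients determined by ${\mathbf R}$. The cleaner route, however, is to pass through harmonic polynomials directly: the homogeneous harmonic polynomials $H_n^m$ of degree $n$ span an irreducible representation of $SO(3)$ (and of $O(3)$), so $H_n^m({\mathbf R}^{-1}{\vec x}) = \sum_{m'} ({\mathbf Q}_n({\mathbf R}))_{m'm} H_n^{m'}({\vec x})$ for a unitary Wigner $D$-matrix ${\mathbf Q}_n({\mathbf R})$. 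First I would invoke this fact, then feed it into the definition~(\ref{eqn:exapandcgpt})--(\ref{eqn:hproducts}) of the CGPT coefficients through their expression as linear combinations of $M_{\alpha\beta}$ weighted by the $a_{\alpha m}^{\MH}$.

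The key steps, in order: (i) record that $V_{\text{sr}}$ is a coordinate-independent scalar — the measured perturbed potential does not care how we label axes — so computing $V_{\text{sr}}$ for the rotated object $B_{\mathbf R}$ with sources/receivers at ${\vec x}^{\text{s}},{\vec x}^{\text{r}}$ equals computing it for $B$ with sources/receivers at ${\mathbf R}^{-1}{\vec x}^{\text{s}},{\mathbf R}^{-1}{\vec x}^{\text{r}}$; (ii) substitute the $K_p^m$ expansion of $G$ on both sides and use the transformation $K_p^m({\mathbf R}^{-1}{\vec x}) = \sum_{m'}({\mathbf Q}_p({\mathbf R}))_{m'm} K_p^{m'}({\vec x})$, which holds because $K_p^m = H_p^m/r^{2p+1}$ and $r$ is rotation-invariant; (iii) match the coefficients of $K_p^m({\vec x}^{\text{r}})\overline{K_q^n({\vec x}^{\text{s}})}$ on both sides, using linear independence of these functions, which yields ${\mathbf M}_{pq}(B_{\mathbf R}) = \overline{{\mathbf Q}_p({\mathbf R})}\,{\mathbf M}_{pq}(B)\,{\mathbf Q}_q({\mathbf R})^t$ after accounting for the complex conjugate sitting on the source factor; (iv) rename $p,q$ as $\ell,n$ to match the statement. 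An alternative, purely algebraic version of step (iii) works directly from $M_{qnpm}^{\text{C}} = \sum_{\alpha,\beta}(2p+1)^{-1}(2q+1)^{-1}\overline{a_{\alpha m}^{\MH}}M_{\alpha\beta}a_{\beta n}^{\MH}$ together with the GPT rotation law and the intertwining identity relating $\{a_{\alpha m}^{\MH}\}$, the monomial rotation matrix, and ${\mathbf Q}_p({\mathbf R})$.

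I expect the main obstacle to be the bookkeeping of complex conjugates and of which index of ${\mathbf Q}_p({\mathbf R})$ is summed, i.e.\ getting the left factor to be $\overline{{\mathbf Q}_\ell({\mathbf R})}$ and the right factor ${\mathbf Q}_n({\mathbf R})^t$ rather than some transpose/conjugate variant. This is really a matter of fixing conventions consistently: whether ${\mathbf Q}_p$ is defined via $H_p^m({\mathbf R}{\vec x})$ or $H_p^m({\mathbf R}^{-1}{\vec x})$, and whether the measurement formula~(\ref{eqn:exapandcgpt}) carries the conjugate on the receiver or the source slot. A secondary, more technical point is justifying that the associated layer-potential operators genuinely commute with rotations — this is standard (the single-layer kernel $G$ depends only on $|{\vec x}-{\vec y}|$, and the outward normal and surface measure transform equivariantly), so it can be stated and attributed rather than proved in detail. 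Since the lemma is quoted from~\cite{ammari3dinvgpt}, the appropriate level of detail in the paper is a one- or two-sentence indication of the argument with the reference, and for the rotation of the HGPT matrices one simply conjugates Lemma~\ref{lemma:rotten} by the unitary ${\mathbf A}_p^{\IH}$ using~(\ref{eqn:mton})--(\ref{eqn:ntom}), obtaining ${\mathbf N}_{\ell n}(B_{\mathbf R}) = ({\mathbf A}_\ell^{\IH})^*\overline{{\mathbf Q}_\ell({\mathbf R})}{\mathbf A}_\ell^{\IH}\,{\mathbf N}_{\ell n}(B)\,({\mathbf A}_n^{\IH})^*{\mathbf Q}_n({\mathbf R})^t{\mathbf A}_n^{\IH}$, i.e.\ conjugation by the real orthogonal matrices $({\mathbf A}_p^{\IH})^*{\mathbf Q}_p({\mathbf R}){\mathbf A}_p^{\IH}$ that represent ${\mathbf R}$ on the real harmonic-polynomial basis $I_p^\ell$.
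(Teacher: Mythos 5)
The paper offers no proof of this lemma at all: it is quoted verbatim, with attribution, from Lemma 4.3 of the cited reference, and the text merely notes that the corresponding HGPT transformation follows by conjugating with $({\mathbf A}_p^{\IH})$ via (\ref{eqn:mton})--(\ref{eqn:ntom}). Your reconstruction is correct and is essentially the standard argument used in the cited source: rotation invariance of the measured potential reduces $V_{\text{sr}}(B_{\mathbf R};{\vec x}^{\text{r}},{\vec x}^{\text{s}})$ to $V_{\text{sr}}(B;{\mathbf R}^{-1}{\vec x}^{\text{r}},{\mathbf R}^{-1}{\vec x}^{\text{s}})$, the Wigner-matrix transformation of $K_p^m$ (which inherits it from $H_p^m$ since $r$ is rotation-invariant) is substituted, and coefficients of the linearly independent products $K_p^{m}({\vec x}^{\text{r}})\overline{K_q^{n}({\vec x}^{\text{s}})}$ are matched; your closing remark correctly identifies that whether the left factor comes out as $\overline{{\mathbf Q}_\ell({\mathbf R})}$ or ${\mathbf Q}_\ell({\mathbf R})$ is purely a matter of the convention fixed in (4.11) of the reference, so there is no substantive gap.
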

By combining the above with expressions (\ref{eqn:mton}) and (\ref{eqn:ntom}) the transformation of  HGPTs under object rotation can be understood.

\section{Object characterisation and symmetry groups} \label{sect:sym}

As remarked in the introduction, the simplest form of a GPT is a rank 2 (P\'oyla-Szeg\"o) tensor, which characterises an object's shape $B$ and its contrast $k$ upto the best fitting ellipsoid. On the other hand, the complete set of GPTs uniquely defines both the shape and material contrast of the object~\cite{ammarikangbook}[pg. 90]. In practice, many of the physical objects that we wish to characterise have  rotational or reflectional symmetries. For example, if an object $B$ is invariant under the action of a rotation matrix ${\mathbf R}$, this means that ${\mathbf M}_{\ell n} (B_{\mathbf R}) ={\mathbf M}_{\ell n} (B)$ in Lemma~\ref{lemma:rotten}
 with a similar result for the matrix of HGPT coefficents ${\mathbf N}_{\ell n}$. If  an object has symmetries, many of the $(2 \ell +1)(2n +1)$ coefficients of
${\mathbf N}_{\ell n}$ will be zero and only a small number of independent coefficients will remain. In our earlier work, we have shown how the number of independent coefficients of a rank 2 polarizability tensor characterisation can be reduced if an object has rotational or reflectional symmetries~\cite{LedgerLionheart2015}.
This raises the question: What are the equivalent class of objects that an (H)GPT of a given order describes? To help to address this, we describe how knowing an object's symmetry group offers an effective way to deduce the independent coefficients of HGPT descriptions of different orders. Beforehand, we  recall how different symmetry groups can be distinguished and review the work of Meyer~\cite{meyer} on finding invariant harmonic polynomials under the action of a group.

\subsection{Types of finite groups}
Following Meyer~\cite{meyer}, we distinguish between different types of groups ${\mathfrak G}$. We consider those consisting of rotations only and those consisting of rotary inversions. A rotary inversion being a rotation followed by central symmetry with respect to a point on the axis of rotation. By choosing the fixed point to be the origin, the rotatory inversion is given by
 ${\mathbf J} {\mathbf R}$ where
\begin{equation}
{\mathbf J} = \left ( \begin{array}{ccc}
-1 & 0 & 0 \\
0  & -1 & 0 \\
0 & 0 & -1 \end{array}
\right ),  \nonumber
\end{equation}
and ${\mathbf R}$ is the rotation.

\subsubsection{Groups consisting of rotations only (Type 1)}
There are five classes of groups of this type and we indicate in the following how the rotational axes of each group are to be placed with respect to the $x_1$, $x_2$ and $x_3$ axes.
\begin{itemize}
\item ${\mathfrak C}_n$ Cyclic group, where the $n$-fold axis is taken to be the $x_3$-axis.
\item ${\mathfrak D}_n$ Dihedral group, where the $n$-fold axis is taken to be the $x_3$-axis and one of the 2-fold axes is taken to be the $x_1$-axis.
\item ${\mathfrak T}$ Tetrahedral group, which consists of rotations that transform a regular tetrahedron to itself. We follow Meyer  who  assumes the tetrahedron is placed so that its 3-fold axes coincide with the 3-fold axes of ${\mathfrak O}$ (below), and the 2-fold axes are taken as the coordinate axes.
\item ${\mathfrak O}$ Octahedral group, which consists of the rotations that transform a cube (or a regular octahedron) onto itself.  We follow Meyer who assumes the cube is placed with its centre at the origin and with its faces parallel to the coordinate axes.
\item ${\mathfrak I}$ Icosahedral group, which consist of the rotations that transform a regular icosahedron (or regular dodecahedron) into itself. Again, we follow Meyer who assumes the icosahedron is placed with its centre at the origin, and the coordinate axes pass through the midpoint of opposite edges such that the edges through which the $x_1$ axis passes are parallel to the $x_2$ axis.

\end{itemize}

\subsection{Groups containing rotary-inversions}
There are two groups containing rotary inversions:
\begin{itemize}
\item Type 2 are those with centre of symmetry, which are obtained by adjoing $\mathbf{J}$ to a group of type 1. The order of these groups is twice that of the corresponding rotational group. The groups of this type are denoted by ${\mathfrak C}_{n,i}$, ${\mathfrak D}_{n,i}$, ${\mathfrak T}_i$, ${\mathfrak O}_i$ and ${\mathfrak I}_i$.

\item Type 3 are derived from a rotational group ${\mathfrak G}_2$, which has a subgroup, ${\mathfrak G}_1$, of index 2.

\end{itemize}

\subsection{Harmonic polynomials invariant under the action of a group}
Meyer describes an approach for determining all the harmonic polynomials $I_m^{ i , {\mathfrak G}} ({\vec x})$ of degree $m$ that are invariant under the action of a given symmetry group ${\mathfrak G}$ where we expect far few than $2m+1$ invariant polynomials.  
He explains that the number of elements $g_m$ of an invariant basis 
of degree $m$ for a group ${\mathfrak G}$  of order $n$ of orthogonal matrices can be obtained from the generating function using a result of Molien~\cite{molien} as
\begin{equation}
g(t) =\sum_{m=0}^\infty g_m t^m .
\end{equation} 
 However, the interest lies in an invariant harmonic basis. Meyer's main theorem addresses this question:
\begin{theorem}[Meyer~\cite{meyer}]\label{thm:meyer1}
Let ${\mathfrak G} $ be a finite group of orthogonal linear transformations in $x_1$, $x_2$ and $x_3$. Let
\begin{equation}
h(t) = \sum_{m=0}^\infty h_m t^m ,
\end{equation}
in which $h_m$ is the number of elements in an invariant harmonic basis for ${\mathfrak G}$ of degree $m$. Then 
\begin{equation}
h(t) = (1-t^2) g(t),
\end{equation}
where $g(t)$ is the generating function of Molien.
\end{theorem}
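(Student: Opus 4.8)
The plan is to exploit the classical decomposition of homogeneous polynomials into harmonic components and to track the two generating functions through that decomposition. Let $V_m$ denote the space of homogeneous polynomials of degree $m$ in $x_1,x_2,x_3$, and let $\mathcal{H}_m\subset V_m$ be the subspace of harmonic homogeneous polynomials of degree $m$, which has dimension $2m+1$. The key structural fact I would invoke is the harmonic (Fischer/Maxwell) decomposition: multiplication by $r^2=x_1^2+x_2^2+x_3^2$ is an injective map $V_{m-2}\hookrightarrow V_m$, and $V_m=\mathcal{H}_m\oplus r^2 V_{m-2}$. Crucially, since ${\mathfrak G}$ consists of orthogonal transformations, $r^2$ is ${\mathfrak G}$-invariant, so this is a decomposition of ${\mathfrak G}$-modules; hence it restricts to the invariant subspaces, giving $V_m^{\mathfrak G}=\mathcal{H}_m^{\mathfrak G}\oplus r^2 V_{m-2}^{\mathfrak G}$ for all $m\ge 2$, and $V_m^{\mathfrak G}=\mathcal{H}_m^{\mathfrak G}$ for $m=0,1$.

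Next I would set up the bookkeeping. Write $g_m=\dim V_m^{\mathfrak G}$ (so that $g(t)=\sum_m g_m t^m$ is Molien's generating function for the full invariant ring, which is the standard Molien series) and $h_m=\dim\mathcal{H}_m^{\mathfrak G}$, with $h(t)=\sum_m h_m t^m$. Taking dimensions in the module decomposition above yields the recursion $g_m=h_m+g_{m-2}$ for $m\ge 2$, together with $g_0=h_0$ and $g_1=h_1$ (one also checks $h_0=g_0=1$ and, for $m=1$, that $\mathcal{H}_1=V_1$ so $h_1=g_1$, consistent with starting the recursion at $m=2$). Multiplying the relation $h_m=g_m-g_{m-2}$ by $t^m$ and summing over $m\ge 0$ — being careful that $g_{-1}=g_{-2}=0$ so the shifted sum contributes exactly $t^2 g(t)$ — gives $h(t)=g(t)-t^2 g(t)=(1-t^2)g(t)$, which is the claim.

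The only genuine subtlety — and the step I would treat most carefully — is the identification of $g(t)$ in Meyer's statement with the Molien series of $V^{\mathfrak G}:=\bigoplus_m V_m^{\mathfrak G}$. One must confirm that ``an invariant basis of degree $m$'' in the excerpt means a vector-space basis of the degree-$m$ graded piece of the algebra of ${\mathfrak G}$-invariant polynomials (not, say, a minimal generating set of the invariant ring), so that $g_m=\dim V_m^{\mathfrak G}$ and Molien's theorem $g(t)=\tfrac{1}{|{\mathfrak G}|}\sum_{A\in{\mathfrak G}}\det(I-tA)^{-1}$ applies verbatim. A secondary point to verify is the injectivity of multiplication by $r^2$ on $V_{m-2}$ — i.e.\ that $r^2$ is not a zero divisor in the polynomial ring, which is immediate since the polynomial ring is an integral domain — so that the dimension count in the direct sum is exact with no correction terms. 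Everything else is the routine linear algebra of graded modules and formal power series.
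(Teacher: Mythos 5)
Your argument is correct, and it is essentially the classical one. Note that the paper itself offers no proof of this statement -- it is imported verbatim from Meyer with a citation -- so there is nothing in the text to compare against; but your reconstruction via the harmonic (Fischer) decomposition $V_m=\mathcal{H}_m\oplus r^2V_{m-2}$, the $\mathfrak{G}$-equivariance of that splitting (because $r^2$ is fixed by orthogonal transformations and the Laplacian commutes with them), and the resulting recursion $g_m=h_m+g_{m-2}$ is precisely the standard route, and is the argument Meyer's original paper uses. Your two flagged subtleties are the right ones to flag: the identification of $g_m$ with $\dim V_m^{\mathfrak{G}}$ (the graded dimension of the invariant ring, which is what the paper's surrounding text intends by ``invariant basis of degree $m$'' and what Molien's formula computes) and the injectivity of multiplication by $r^2$; both check out, so the dimension count $h_m=g_m-g_{m-2}$ and the summation to $h(t)=(1-t^2)g(t)$ go through without correction terms.
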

\begin{remark}
 Some of these sequences $h_m$ are listed in the online encyclopedia of integer sequences~\cite{intsequences}.
\end{remark}

The invariant harmonic polynomials  can be obtained by applying another result of Meyer:
\begin{theorem}[Meyer~\cite{meyer}]\label{thm:meyerpoly}
Let 
\begin{equation}
Q_1(x_1,x_2,x_3), \ldots,Q_{h_m}(x_1,x_2,x_3), \nonumber
\end{equation}
be $h_m$ homogeneous harmonic (operating) polynomials of degree $m$, which are invariants of ${\mathfrak G}$ and which are linearly independent $\text{mod }r^2=\text{mod }(x_1^2 +x_2^2 +x_3^2)$ then the $h_m$ harmonic functions 
\begin{equation}
r^{2m+1} Q_j \left ( \frac{\partial}{\partial x_1},  \frac{\partial}{\partial x_2}, \frac{\partial}{\partial x_3} \right ) \frac{1}{r}, \qquad j=1,\ldots,h_m,
\end{equation}
form an invariant harmonic basis of degree $m$ for ${\mathfrak G}$.
\end{theorem}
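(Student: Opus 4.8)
The plan is to verify that the $h_m$ functions $f_j({\vec x}) := r^{2m+1} Q_j(\partial_{x_1},\partial_{x_2},\partial_{x_3})\,(1/r)$ are (i) harmonic, (ii) homogeneous of degree $m$, (iii) invariant under ${\mathfrak G}$, and (iv) linearly independent; since the space of invariant harmonic polynomials of degree $m$ has dimension exactly $h_m$ by Theorem~\ref{thm:meyer1}, properties (i)--(iv) together force the $f_j$ to be a basis of that space. The key background fact is the one already recorded in Section~\ref{sect:spharmoics}: if $H$ is a homogeneous harmonic polynomial of degree $m$, then $K({\vec x}) := r^{-(2m+1)} H({\vec x}) = H({\vec x})/r^{2m+1}$ is harmonic away from the origin (this is exactly the relation $K_n^m = H_n^m/r^{2n+1}$, extended by linearity to all of the degree-$m$ harmonic space). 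The operator $Q_j(\partial)$ is a constant-coefficient differential operator that is harmonic as a polynomial in the $\partial_{x_i}$, i.e. $\Delta$ annihilates $Q_j$ viewed as a polynomial; this is what the hypothesis ``harmonic operating polynomial'' supplies, together with linear independence $\bmod\,r^2$.

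First I would establish homogeneity and harmonicity of $f_j$. Applying a degree-$m$ homogeneous differential operator to $1/r$, which is homogeneous of degree $-1$, yields a function homogeneous of degree $-(m+1)$; multiplying by $r^{2m+1}$ gives degree $m$. To see $f_j$ is a polynomial (not merely a homogeneous function) and harmonic, I would use the classical fact that $Q_j(\partial)(1/r)$ equals $c_m\, \widetilde{Q}_j({\vec x})/r^{2m+1}$ for a constant $c_m \ne 0$ depending only on $m$, where $\widetilde{Q}_j$ is again a homogeneous harmonic polynomial of degree $m$ — indeed $\widetilde{Q}_j$ is, up to the normalisation, the ``harmonic polynomial associated to $Q_j$'' under the Kelvin-type correspondence. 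Concretely one can derive this from $\partial_{x_i}(1/r) = -x_i/r^3$ and induction, or cite it as the standard identity relating solid harmonics and their derivatives (it is the same mechanism underlying Maxwell's representation of spherical harmonics via multipoles). Then $f_j = c_m\,\widetilde{Q}_j$ is manifestly a homogeneous harmonic polynomial of degree $m$.

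Next I would handle invariance. For ${\mathbf R}\in{\mathfrak G}$ orthogonal, the chain rule gives $Q_j(\partial)\big((1/r)\circ {\mathbf R}^{-1}\big) = \big(Q_j({\mathbf R}^{-t}\partial)(1/r)\big)\circ{\mathbf R}^{-1}$; since $1/r$ is radial it is ${\mathbf R}$-invariant, and ${\mathbf R}^{-t}={\mathbf R}$, so the transformed operator is $Q_j({\mathbf R}\,\partial)$. The hypothesis that $Q_j$ is an invariant of ${\mathfrak G}$ means $Q_j({\mathbf R}\,{\vec x}) = Q_j({\vec x})$, equivalently $Q_j({\mathbf R}\,\partial)=Q_j(\partial)$ as operators, so $f_j\circ{\mathbf R}^{-1} = r^{2m+1}Q_j(\partial)(1/r)\circ{\mathbf R}^{-1} = f_j$, using again that $r$ is ${\mathbf R}$-invariant. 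Finally, for linear independence: if $\sum_j \lambda_j f_j \equiv 0$ then $\sum_j \lambda_j \widetilde{Q}_j \equiv 0$, and because the map $Q \mapsto \widetilde{Q}$ is precisely the canonical isomorphism between harmonic polynomials of degree $m$ modulo the relations $\bmod\,r^2$ and honest harmonic polynomials of degree $m$ (this is the Fischer/Fourier decomposition $\mathcal{P}_m = \mathcal{H}_m \oplus r^2\mathcal{P}_{m-2}$ read through derivatives), linear independence $\bmod\,r^2$ of the $Q_j$ transfers to linear independence of the $\widetilde{Q}_j$, hence all $\lambda_j=0$.

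The main obstacle is the second step: making precise and justifying the identity $r^{2m+1} Q(\partial)(1/r) = c_m\,\widetilde{Q}({\vec x})$ with $\widetilde{Q}$ harmonic, and identifying the correspondence $Q\mapsto\widetilde{Q}$ well enough to carry both the invariance and the ``linear independence $\bmod\,r^2$'' bookkeeping across it. Everything else — homogeneity counting, the orthogonal change of variables in the differential operator, and the dimension count via Theorem~\ref{thm:meyer1} — is routine once that correspondence is in hand.
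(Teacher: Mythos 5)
The paper itself gives no proof of Theorem~\ref{thm:meyerpoly}: it is quoted from Meyer~\cite{meyer} as a black box, so there is no in-paper argument to compare yours against. Judged on its own, your proof is correct and is essentially Meyer's original route (Maxwell's theory of poles). The crux you single out --- that for a homogeneous degree-$m$ polynomial $Q$ one has $Q(\partial)(1/r)=\widetilde{Q}({\vec x})/r^{2m+1}$ with $\widetilde{Q}$ harmonic of degree $m$, and that $Q\mapsto\widetilde{Q}$ induces an isomorphism from degree-$m$ polynomials modulo $r^2$ onto the harmonic ones --- is Hobson's theorem; the kernel computation is immediate from $(r^2P)(\partial)(1/r)=\Delta\bigl[P(\partial)(1/r)\bigr]=0$ away from the origin, together with the dimension count $(m+1)(m+2)/2-(m-1)m/2=2m+1$, and surjectivity follows since a harmonic $Q$ maps to a nonzero multiple of itself. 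Your homogeneity count, the orthogonal change of variables giving invariance, and the final appeal to Theorem~\ref{thm:meyer1} to upgrade ``$h_m$ linearly independent invariant harmonic polynomials'' to ``basis'' are all sound. One caveat: you read the hypothesis as asserting that the operating polynomials $Q_j$ are themselves harmonic. They need not be --- the paper's own worked example (Table~\ref{tab:invd4}) uses $Q=x_3^2$ and $Q=x_3^4$, neither of which is harmonic, and if the $Q_j$ were harmonic the condition ``linearly independent $\bmod\ r^2$'' would collapse to plain linear independence. Fortunately your argument never actually uses harmonicity of the $Q_j$ (you carry the general $\widetilde{Q}_j$ throughout), so the proof stands; just discard that reading of the hypothesis rather than building on it.
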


The harmonic polynomials $I_m^{i,{\mathfrak G}}({\vec x})$, $i=1,\ldots, h_m$ of degree $m$ that are invariant under the action of the group ${\mathfrak G}$ produced by the above will be linear combinations of $I_m^\ell({\vec x})$ so that
\begin{equation}
I_m^{i, \mathfrak G}({\vec x}) = \sum_{\ell =-m}^m a_{\ell i}^{I, {\mathfrak G}} I_m^\ell ({\vec x}), \qquad i =1, \ldots, h_m  \label{eqn:ItoS}
.
\end{equation}

We illustrate Meyer's approach by considering the object shown in Figure~\ref{fig:block}.
\begin{figure}
\begin{center}
\includegraphics[width=1in]{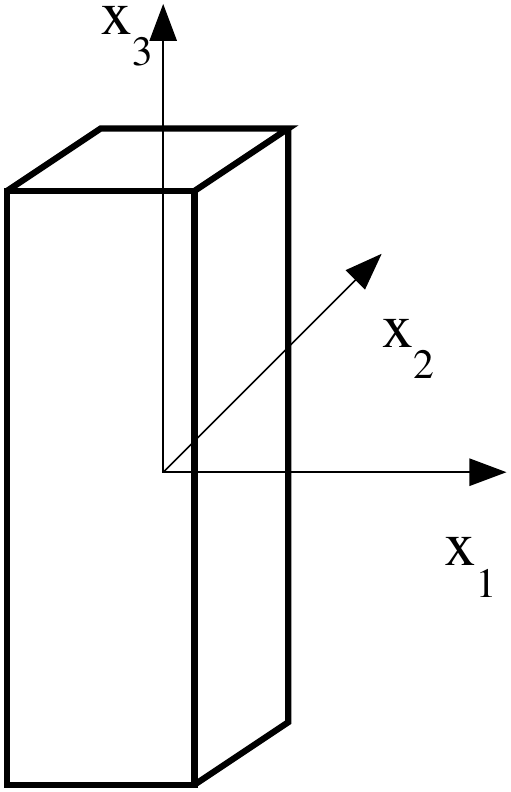}
\end{center}
\caption{Illustration of an object $B$, which has the symmetry group ${\mathfrak D}_4$.} \label{fig:block}
\end{figure}
This object has a 4-fold rotational symmetry about $x_3$ and 2-fold rotational symmetries about the $x_1$ and $x_2$ axes. This means the appropriate symmetry group for the object is ${\mathfrak G} = {\mathfrak D}_4$.  In his Table I, Meyer presents $h(t)$ for different groups, with ${\mathfrak D}_4$ being of the form
\begin{equation}
h(t) =(1-t^2)g(t) =\frac{1}{(1-t^2)} \frac{1+t^5}{1-t^4} = 1 + t^2 +2t^4 +t^5+ \ldots,
\end{equation}
which tells us $h_m=1$ for $m=0,2,5$, so there is just a single invariant harmonic polynomial, $I_m^{1,{\mathfrak D}_4} ({\vec x})$ for these degrees, while $h_m=2$ for $m=2$, and so we have   $I_m^{j ,{\mathfrak D}_4} ({\vec x})$, $j=1,2$,
 for this degree, and $h_m=0$ for $m=1,3 $, and so we have no invariant harmonic polynomials for these degrees.

To construct the $I_m^{j,{\mathfrak G}} ({\vec x})$ for each degree $m$, we refer to Table III in Meyer's article where he describes how to construct the operating polynomial for each group ${\mathfrak G}$. In our case,  the operating polynomials $Q_1(x_1,x_2,x_3), \ldots, Q_{h_m}(x_1,x_2,x_3)$ for ${\mathfrak G}={\mathfrak D}_4$ are constructed from $x_3^2$, $C_{un}$,  $x_3 C_{un}'$, $C_{vn}$, $x_3 C_{un}'$, $u=1,3,5,\ldots$, $v=2,4,6,\ldots$, where
\begin{align}
C_n =& x_1^n - \left ( \begin{array}{c} n \\ 2 \end{array} \right ) x_1^{n-2} x_2^2 + \left ( \begin{array}{c} n \\ 4 \end{array} \right ) x_1^{n-4} x_2^4- \ldots, \nonumber\\
C_n' =&\left ( \begin{array}{c} n \\ 1 \end{array} \right ) x_1^{n-1} x_2 - \left ( \begin{array}{c} n \\ 3 \end{array} \right ) x_1^{n-3} x_2^3 + \left ( \begin{array}{c} n \\ 5 \end{array} \right ) x_1^{n-5} x_2^5 - \ldots ,\nonumber
\end{align}
In Table~\ref{tab:invd4} we summarise the results of the calculation for different $m$ and the group ${\mathfrak D}_4$. Observe that the results $ I_m^{j,{\mathfrak D}_4} ({\vec x})$, $j=1,\ldots,h_m,$ are linear combinations of the $I_m^\ell({\vec x})$, $\ell = -m,\ldots,m$ of the same degree $m$ presented in Table~\ref{tab:harmonicpolys}, as expected.

\begin{table}
\begin{center}
\begin{tabular}{|c|c|c|c|}
\hline
$m$ & $h_m$ &  $Q_j(x_1,x_2,x_3)$, $j=1,\ldots,h_m$ & $I_m^{j,{\mathfrak D}_4} ({\vec x}) =r^{2m+1} Q_j \left ( \frac{\partial}{\partial x_1},  \frac{\partial}{\partial x_2}, \frac{\partial}{\partial x_3} \right ) \frac{1}{r} $ \\
\hline
0 & 1 & $1$ &$1$ \\
\hline
1 & 0 & - & - \\
\hline
2 & 1 & $x_3^2$ & $2x_3^2 -x_1^2 -x_2^2$ \\
\hline
3 & 0 & - & - \\
\hline
4 & 2 & $x_3^4$ &$ 24x_3^4 +9( x_1^4 + x_2^4) -72 ( x_1^2x_3^2 + x_2^2 x_3^2 ) +18x_1^2 x_2^2$\\ 
{} & {} & $C_4 = x_1^4 -6x_1^2 x_2^2$  & $105( x_1^4 +x_2^4)  -630 x_1^2 x_2 ^2 $ \\
\hline
\end{tabular}
\end{center}
\caption{Illustration of harmonic polynomials $I_m^{j, {\mathfrak G}} ( {\vec x}) $, $j=1,\ldots,h_m$, of degrees $m=0,1,2,3,4$, which are invariant under the symmetry group ${\mathfrak G}= {\mathfrak D}_4$.} \label{tab:invd4}
\end{table}

\subsection{Symmetric products of harmonic polynomials invariant under the action of a symmetry group}

From (\ref{eqn:gpt}) and (\ref{eqn:hgpt1}) we see that the coefficients of the HGPTs have the form
\begin{equation}
M_{qjpi}^{\text{H}} =C(p,q) \int_{\partial B} I_p^i({\vec y})   
 ( \lambda I - K_B^*)^{-1} ( {\vec \nu}_{\vec x} \cdot \nabla_{\vec x} I_q^j ({\vec x}) ) ({\vec y}) \dif {\vec y} \nonumber ,
\end{equation} 
where $C(p,q)$ depends on $p$ and $q$. Now consider the HGPT characterisation of $B$ under the action of a rotation matrix ${\mathbf R}$ as
\begin{align}
M_{qjpi}^{\text{H}} ({\mathbf R} (B))=&C(p,q) \int_{\partial {\mathbf R} (B)} I_p^i( {\vec y})   
 ( \lambda I - K_B^*)^{-1} ( {\vec \nu}_{\vec x} \cdot \nabla_{\vec x} I_q^j ({\vec x}) ) ({\vec y}) \dif {\vec y} \nonumber \\
=& C(p,q) \int_{\partial B} I_p^i({\mathbf R}{\vec y} )  
 ( \lambda I - K_B^*)^{-1} ( {\vec \nu}_{\vec x} \cdot \nabla_{\vec x} I_q^j ({\vec x}) ) ({\mathbf R} {\vec y}) \dif {\vec y} \nonumber \\
 =& C(p,q) \int_{\partial B} I_p^i({\mathbf R} {\vec y})  
 ( \lambda I - K_B^*)^{-1} ( {\vec \nu}_{\vec x} \cdot \nabla_{\vec x} I_q^j ({\mathbf R} {\vec x}) ) ({\vec y}) \dif {\vec y} \nonumber ,
\end{align}
which follows since $ ( \lambda I - K_B^*)$ is invariant under the rotation map~\cite{ammari2012}{Prop. 4.1].  

Furthermore, since the HGPT coefficients $M_{qjpi}^{\text{H}}$ appear together with products of harmonic polynomials, as expressed in (\ref{eqn:hproducts}),   an  object rotation is equivalent to a rotation of the coordinate system for both the excitation and measurement so that
\begin{equation}
\sum_{i=-p}^{ p }  \sum_{j=-q}^{q }
 {I_{p}^i({\mathbf R}{\vec x})}  M_{qjpi}^{\text{H}}  
  {I_{q}^j ({\mathbf R}{\vec y})}
  \equiv
\sum_{i=-p}^{ p }  \sum_{j=-q}^{q }
 {I_{p}^i({\vec x})}  M_{qjpi}^{\text{H}} ({\mathbf R} (B)) 
  {I_{q}^j ({\vec y})}.
  \end{equation}
In addition, given the symmetry property of HGPTs described in Proposition~\ref{sect:propsymhgpt}, determining those HGPT coefficients $M_{qjpi}^H$ that are invariant under the action of a symmetry group reduces to finding symmetric products of harmonic polynomials  $I({\vec x}), J({\vec x})$  (of  possibly different degrees $p$ and $q$)  in the form 
\begin{align}
S({\vec x},{\vec y}) = S({\vec y},{\vec x})=I({\vec x}) J({\vec y}) + I({\vec y})J({\vec x}) ,  \nonumber
\end{align}
 that have the property that
\begin{align}
S({\mathbf R}{\vec x},{\mathbf R}{\vec y}) = S({\vec x},{\vec y}) , \label{eqn:fixs}
\end{align}
for all matrix representations ${\mathbf R}$ that make up the group ${\mathfrak G}$. Applying the terminology of representation theory (see Appendix~\ref{sect:app} for a brief summary of the key results we require), we say (\ref{eqn:fixs}) means that $S({\vec x},{\vec y})$ is fixed by ${\mathfrak G}$.

Meyer's results in Theorems~\ref{thm:meyer1} and~\ref{thm:meyerpoly} determine the number of invariant harmonic polynomials  that are fixed by ${\mathfrak G}$ and he provides a methodology to determine a basis for these. However, for HGPTs, we need to find the number of  symmetric harmonic polynomials $S({\vec x},{\vec y})$  that are fixed by ${\mathfrak G}$ and, also, to establish  a methodology to determine a basis for  these invariant products. Therefore, we follow a different approach to Meyer.

We introduce 
\begin{equation}
{\vec S}_{pq} :=\{ I_p^i({\vec x}) I_q^j({\vec y}) + I_p^i({\vec y})I_q^j({\vec x}) , -p\le i\le p ,-q\le j\le q, {\vec x}\in {\mathbb R}^3, {\vec y}\in {\mathbb R}^3 \}, \nonumber
\end{equation}
for the vector space provided by the symmetric products of harmonic polynomials of degree $p$ and $q$ and denote by
\begin{equation}
{\vec S}_{pq}^{\mathfrak G} :=\{ {\vec S} \in {\vec S}_{pq}   : {\vec \pi} ( {\mathbf R})({\vec S}) ={\vec S}  \ \forall {\mathbf R} \in {\mathfrak G}  \} \subseteq {\vec S}_{pq} ,  \nonumber
\end{equation}
the subspace corresponding to those elements of ${\vec S}_{pq}$, which are fixed ${\mathfrak G}$, with ${\mathfrak G}$ having the representation $({\vec \pi}, {\vec S}_{pq})$ where ${\vec \pi}$ is a homomorphism from ${\mathfrak G}$ to the group of invertible linear transformations of $ {\vec S}_{pq}$.

Based on the results in  Appendix~\ref{sect:app}, we provide Algorithm~\ref{alg:getsymprodbasis} for determining the dimension of ${\vec S}_{pq}^{\mathfrak G}$ and a basis for $ {\vec S}_{pq}^{\mathfrak G}$~\footnote{We have implemented  Algorithm~\ref{alg:getsymprodbasis} in Mathematica.  Our implementation
 available at \texttt{https://github.com/pdledger/HGPTSymmetries} (will be made public on publication)
 follows these steps, but it is considerably more involved  (given in part our limited knowledge of Mathematica's functionality)}.

\begin{algorithm}
\caption{Algorithm for determining the dimension and a basis for $ {\vec S}_{pq}^{\mathfrak G}$. } \label{alg:getsymprodbasis}
\begin{algorithmic}[1]
\Require${\vec S}_{pq}  $ containing symmetric products of harmonic polynomials of degrees $p$ and $q$, from Table~\ref{tab:harmonicpolys}, and rotation matrices ${\mathbf R}_i$, $ i=1,\ldots,n$, describing the
 group ${\mathfrak G}$ of order $n$.
\Ensure The dimension of  $ {\vec S}_{pq}^{\mathfrak G}$ and a basis for $ {\vec S}_{pq}^{\mathfrak G}$.
\State Determine ${\mathbf M}_\pi=\frac{1}{n} \sum_{i=1}^n {\vec \pi} ({\mathbf R}_i)$.
\State The dimension of  ${\vec S}_{pq}^{\mathfrak G}$ is $m=\Tr ({\mathbf M}_\pi) $.
\State A basis for ${\vec S}_{pq}^{\mathfrak G}$ is the first $m$ independent elements of $ {\mathbf M}_\pi {\vec S}_{pq}$.
\end{algorithmic}
\end{algorithm}

In the case of a dihedral group ${\mathfrak D}_n$ of order $n$, one could apply Algorithm~\ref{alg:getsymprodbasis} first for ${\mathfrak C}_n$ (for rotations about $x_3$) to obtain ${\vec S}_{pq}^{{\mathfrak C}_n}$ and then again for ${\mathfrak C}_2$ (for rotations about $x_1$)  to generate ${\vec S}_{pq}^{{\mathfrak C}_2^{x_1}}$.  Determining a basis for   ${\mathfrak D}_n$ then reduces to  determining a basis for the intersection  ${\vec S}_{pq}^{{\mathfrak C}_n} \cap {\vec S}_{pq}^{{\mathfrak C}_2^{x_1}}$. Alternatives for determining the intersection of these subspaces  include Algorithm 12.4.3 provided by Golub and van Loan in~\cite{golub}[pg. 604], part of the Zassenhaus algorithm~\cite{zassenhaus} or alternatively  Algorithm~\ref{alg:subspace} could be applied to determine the intersection between two subspaces of a vector space.

\begin{algorithm}
\caption{Algorithm for determining the intersection of two-subspaces of a linear vector space.} \label{alg:subspace}
\begin{algorithmic}[1]
\Require Two subspaces with given basis for ${\mathbb R}^n$ provided in ${\mathbf A} \in {\mathbb R}^{n \times p}$ and $ {\mathbf B}\in {\mathbb R}^{n \times q}$ with $p,q \le n$.
\Ensure A basis for intersection of the subspaces.
\State Create ${\mathbf C} = \left ( \begin{array}{cc} {\mathbf A} & {\mathbf B} \end{array} \right ) \in {\mathbb R}^{n \times (p+q)}$.
\State Determine the null space ${\mathbf N}$ of ${\mathbf C}$.
\State Then the rows of either ${\mathbf N}(:,1:p) {\mathbf A}^T$ or ${\mathbf N}(:,p+1:p+q) {\mathbf B}^T$ provide  a basis for the intersection, where $:$ indicates the complete set of row (or column) elements and $1:p$ the elements between 1 and $p$.
\end{algorithmic}
\end{algorithm}

\begin{remark}
Once ${\vec S}_{pq}^{\mathfrak G}$ is identified, the invariant HGPT coefficients are immediate.
\end{remark}

\subsection{Application of Algorithm~\ref{alg:getsymprodbasis} for ${\vec S}_{11}$ and ${\mathfrak C}_4$}\label{sect:example}

We consider an illustration of Algorithm~\ref{alg:getsymprodbasis} for ${\vec S}_{11}$, which, using Table~\ref{tab:harmonicpolys}, explicitly has the form
\begin{equation}
{\vec S}_{11}= \{ x_1 y_1, x_1y_2 +x_2y_1, x_1 y_3 +x_3 y_1, x_2 y_2, x_2 y_3 +x_3 y_2 ,  x_3 y_3 \} , \nonumber 
\end{equation}
and
 the ${\mathfrak C}_4$ group, which is described by the rotation matrices
\begin{equation}
{\mathbf R}_1 =\left ( \begin{array}{rrr} 1 & 0 & 0 \\ 0 & 1 & 1 \\ 0 &  0 & 1 \end{array} \right ), \ {\mathbf R}_2 =\left ( \begin{array}{rrr}  0& -1 & 0 \\ 1 & 0 & 0 \\ 0 &  0 & 1 \end{array} \right ) , \
{\mathbf R}_3 =\left ( \begin{array}{rrr}  -1& 0 & 0 \\ 0 & -1 & 0 \\ 0 &  0 & 1 \end{array} \right ), \ {\mathbf R}_4 =\left ( \begin{array}{rrr}  0& 1  & 0 \\ -1 & 0 & 0 \\ 0 &  0 & 1 \end{array} \right ) 
. \nonumber
\end{equation}
Then, by following the ordering in ${\vec S}_{11}$,  ${\vec \pi } ({\mathbf R}_1)={\mathbf I}_{6\times 6}$ is just the $6 \times 6$ identity matrix and 
\begin{equation}
 {\vec \pi } ({\mathbf R}_2)  = \left ( \begin{array}{rrrrrr}
0 & 0 & 0 & 1 & 0 & 0\\
0 & -1& 0 & 0 & 0 & 0\\
0 & 0 & 0 & 0& -1 & 0\\
1 & 0 & 0 & 0 & 0 & 0\\
0 & 0 & 1 & 0 & 0 & 0 \\
0 & 0 & 0 & 0 & 0 & 1 \end{array} \right ), \
 {\vec \pi } ({\mathbf R}_3)  = \left ( \begin{array}{rrrrrr}
1 & 0 & 0 & 0 & 0 & 0\\
0 & 1& 0 & 0 & 0 & 0\\
0 & 0 & -1 & 0& 0 & 0\\
0 & 0 & 0 & 1 & 0 & 0\\
0 & 0 & 0 & 0 & -1 & 0 \\
0 & 0 & 0 & 0 & 0 & 1 \end{array} \right ) , \nonumber
\end{equation}
\begin{equation}
 {\vec \pi } ({\mathbf R}_4)  = \left ( \begin{array}{rrrrrr}
0 & 0 & 0 & 1 & 0 & 0\\
0 & -1& 0 & 0 & 0 & 0\\
0 & 0 & 0 & 0& 1 & 0\\
1 & 0 & 0 & 0 & 0 & 0\\
0 & 0 & -1 & 0 & 0 & 0 \\
0 & 0 & 0 & 0 & 0 & 1 \end{array} \right ), \nonumber
\end{equation}
are obtained by applying ${\mathbf R}_2, {\mathbf R}_3$ and ${\mathbf R}_4$ to each element of ${\vec S}_{11}$. Their average is
\begin{equation}
{\mathbf M}_\pi = 
\frac{1}{4}  \left ( \begin{array}{rrrrrr}
2 & 0 & 0 & 2 & 0 & 0\\
0 & 0 & 0 & 0 & 0 & 0\\
0 & 0 & 0 & 0& 0 & 0\\
2 & 0 & 0 & 2 & 0 & 0\\
0 & 0 & 0 & 0 & 0 & 0 \\
0 & 0 & 0 & 0 & 0 & 4 \end{array} \right ), \nonumber
\end{equation}
and, hence, it follows that the dimension of ${\vec S}_{11}^{{\mathfrak C}_4} $ is $m=\Tr ( {\mathbf M}_{\pi } )=2$ and a basis for ${\vec S}_{11}^{{\mathfrak C}_4} $ are  the 2  independent elements of
\begin{equation}
{\mathbf M}_\pi {\vec S}_{pq} = \frac{1}{2} \left ( \begin{array}{c} 
 x_1 y_1+ x_2 y_2 \\ 0 \\ 0 \\ x_1 y_1 +  x_2 y_2 \\ 0\\ 2 x_3 y_3 \end{array} \right ).
 \end{equation}
Thus, by removing unnecessary constants, ${\vec S}_{11}^{{\mathfrak C}_4}= \{ x_1 y_1+  x_2 y_2,   x_3 y_3 \} $. Note that Table~\ref{tab:orthharmonicpolys}  could be equivalently used to define ${\vec S}_{11}$, and this will result in different HGPT coefficients, but will result {in} an equivalent reduction in dimensions.

\begin{remark} \label{tab:grptab}
In a similar way to the example above, the sets of ${\vec S}_{pq}^{\mathfrak G}$ for the groups ${\mathfrak G}= {\mathfrak C}_n$, $n=2,3,4,5,6$, together with their dimensions, can be obtained from Algorithm~\ref{alg:getsymprodbasis} and the results for different $p,q$ are presented in Tables~\ref{tab:symc2}, ~\ref{tab:symc3}, ~\ref{tab:symc4}, ~\ref{tab:symc5} and ~\ref{tab:symc6}.
{The corresponding sets of ${\vec S}_{pq}^{\mathfrak G}$ for the groups ${\mathfrak G}= {\mathfrak D}_n$, $n=2,3,4,5,6$,  ${\mathfrak G}= {\mathfrak I}$, ${\mathfrak T}$  and ${\mathfrak O}$ can be obtained using our software  }\texttt{https://github.com/pdledger/HGPTSymmetries}.
In the case of the cyclic and dihedral groups, and the considered polynomial degrees, there is no change in  ${\vec S}_{pq}^{\mathfrak G}$ for $n \ge 6$. This indicates that  to distinguish a particular order cyclic (or dihedral) group and its lower counterparts we need to {have} HGPTs of sufficient degree.  
 \end{remark}

\subsection{Examples of  $ {\vec S}_{pq}^{\mathfrak G}$ for different groups ${\mathfrak G}$}

Considering magnetostatic measurements using magnetometry (or metal detection measurements at very low frequencies, where only the magnetic part of the characterisation can be recorded), one potential application is to discriminate between unexploded ordnance (UXO) and shrapnel buried in the ground in regions of former conflict. While shrapnel will be random in shape and, hence, lack geometrical symmetries, common forms of UXO are likely to have cyclic symmetries. To illustrate this, some images of decommissioned  mortar bomb shell casings and hand (fragmentation) grenades, which were taken during a visit to the \href{https://www.visitcambodiatravel.com/cambodia-war-remnant-museum.html}{Cambodia War Remnant Museum} by one of the authors,  are shown in Figure~\ref{fig:uxo}. In this figure, the exterior shape of the objects shown can be characterised by the cyclic group of different orders including ${\mathfrak C}_4$, ${\mathfrak C}_6$, ${\mathfrak C}_8$, ${\mathfrak C}_{10}$, but there may be internal structures or damage that breaks the symmetry. {The sketches of a BLU-61 submunition shown in Figure~\ref{fig:blu61} illustrate the object's mirror symmetry as well as its 4--fold rotational symmetry and, hence, when suitably orientated, the object's exterior shape is characterised by the ${\mathfrak D}_4$ group. While this figure illustrates that the dihedral group may also be useful for describing some UXO components,}  the tetrahedral, octahedral and icosahedral symmetry groups are less likely to be so, but may still be useful for other forms of object detection (e.g. in crystallography or   in other forms of detection that the authors are yet to consider). Further applications include in the characterisation of conducting objects in EIT, ERT and by electrosensing fish, as described in the introduction.
\begin{figure}
\begin{center}
$\begin{array}{cc}
\includegraphics[width=2in]{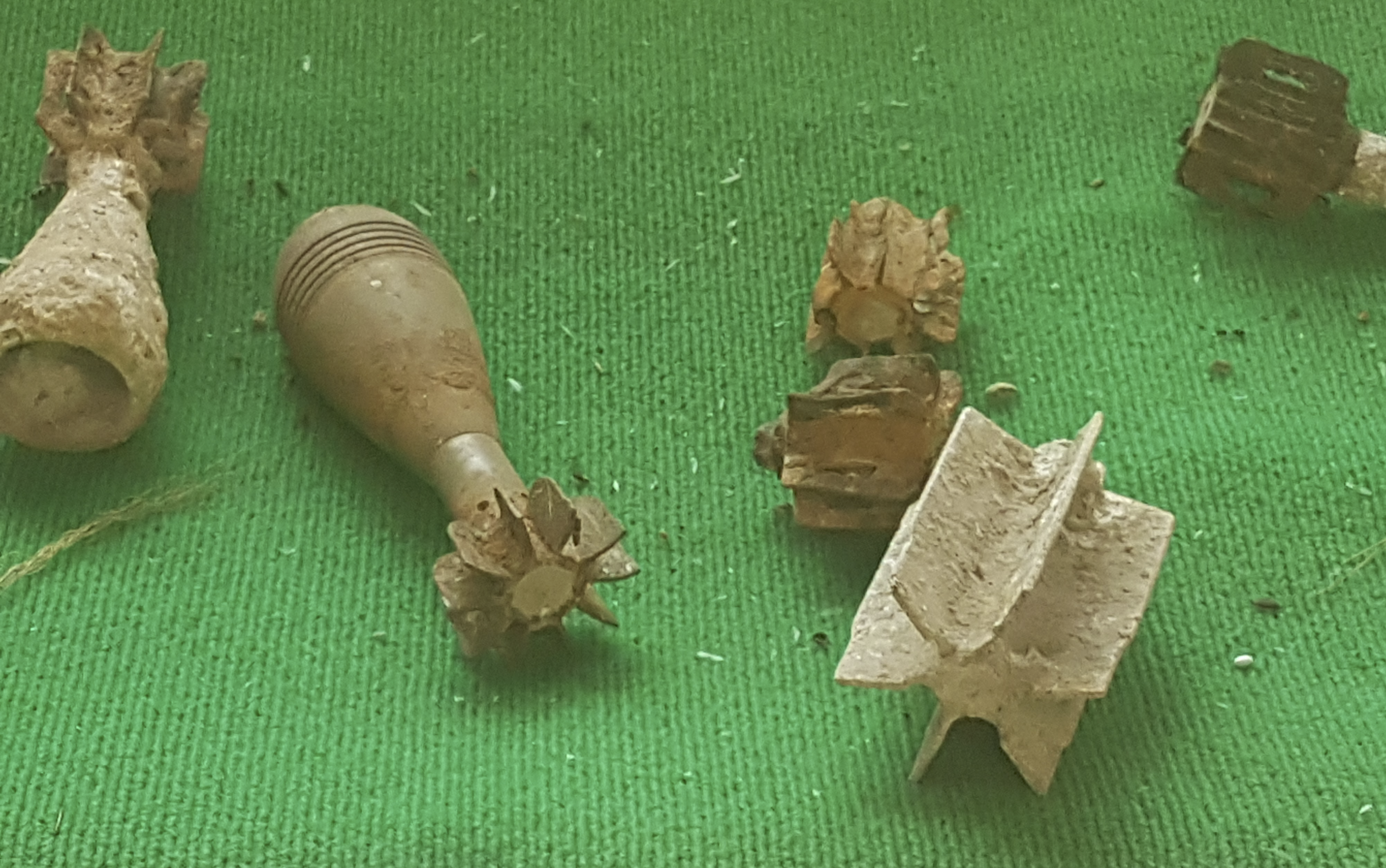} &
\includegraphics[width=2in]{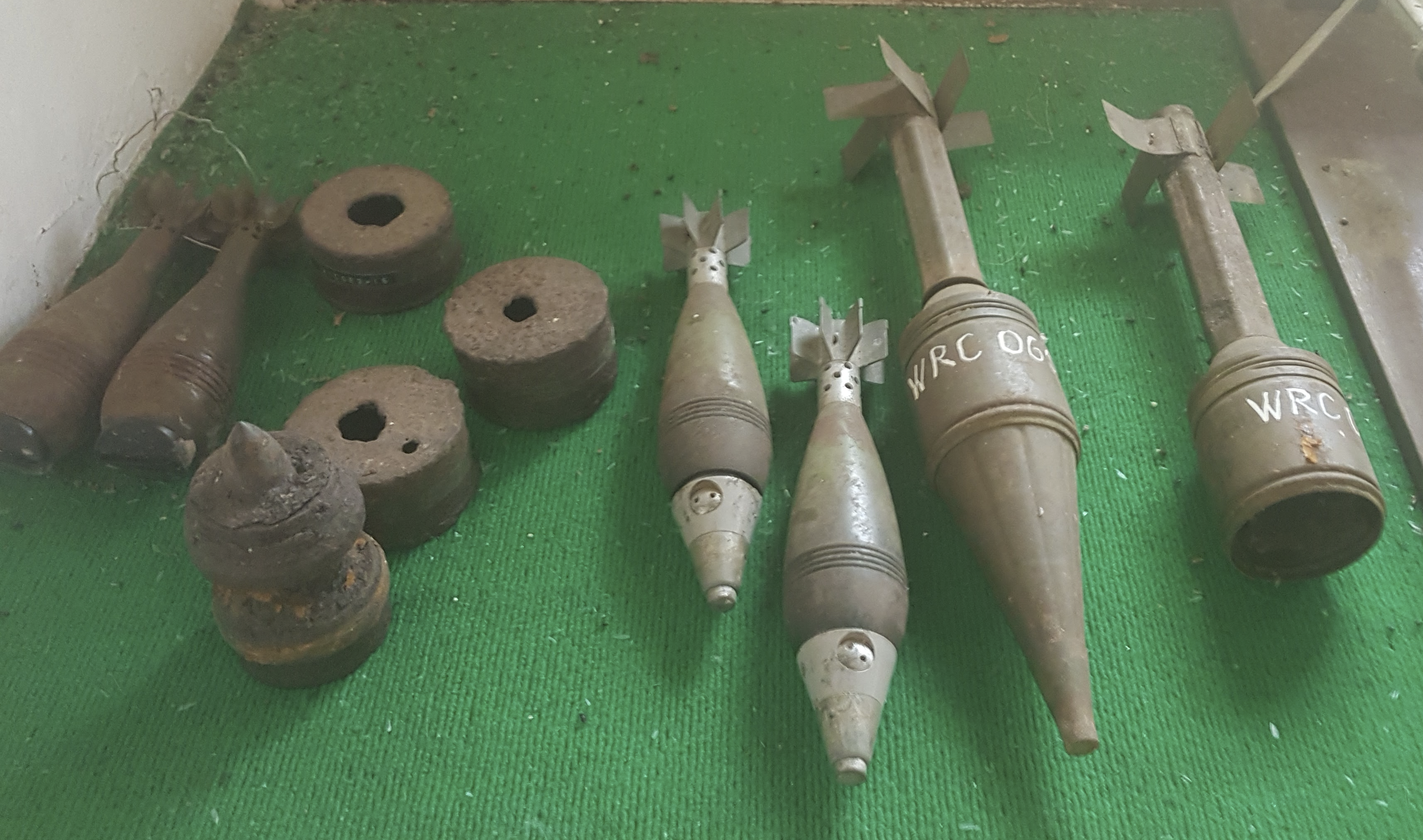} \\
\includegraphics[width=2in]{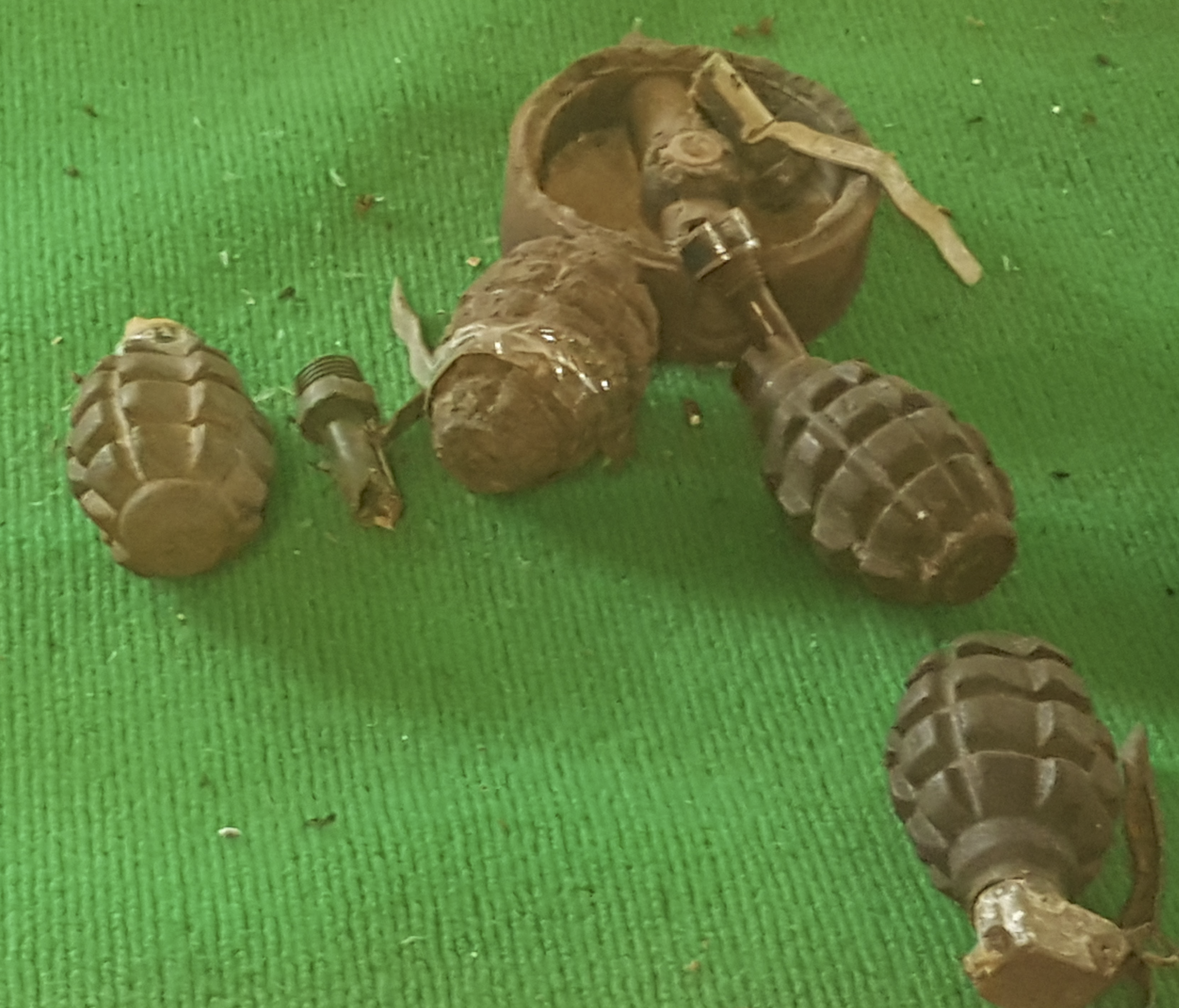} &
\includegraphics[width=2in]{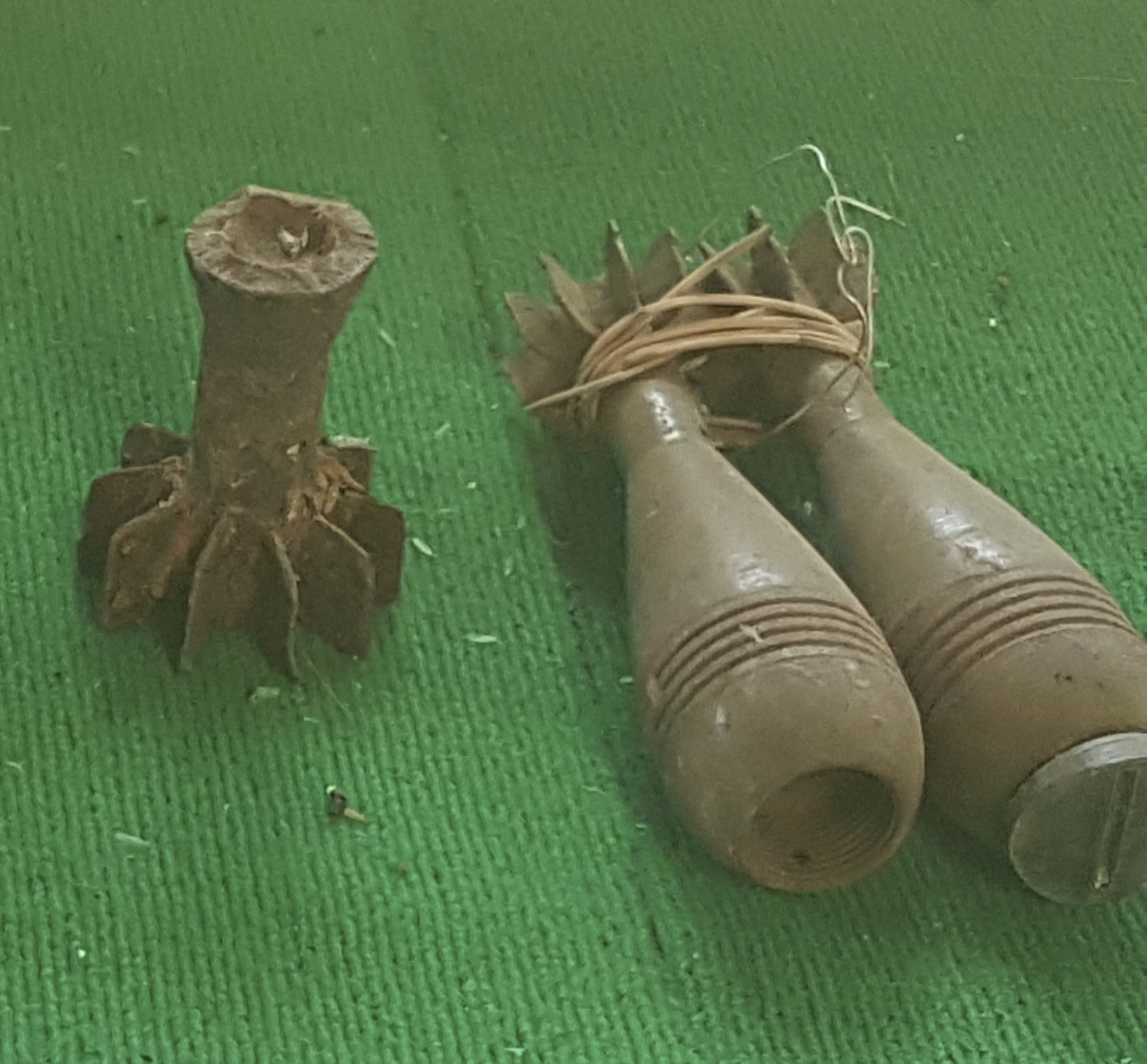} \\
\end{array}$
\end{center}
\caption{A collection of decommissioned UXO (including both mortar bomb shell casings and hand (fragmentation) grenades) that are housed at the Cambodia War Remnant Museum, with artefacts being characterised by different cyclic symmetry groups including ${\mathfrak C}_4$, ${\mathfrak C}_5$, ${\mathfrak C}_6$, ${\mathfrak C}_8$, ${\mathfrak C}_{10}$. }\label{fig:uxo}
\end{figure}

\begin{figure}
\begin{center}
$\begin{array}{cc}
\includegraphics[width=1in]{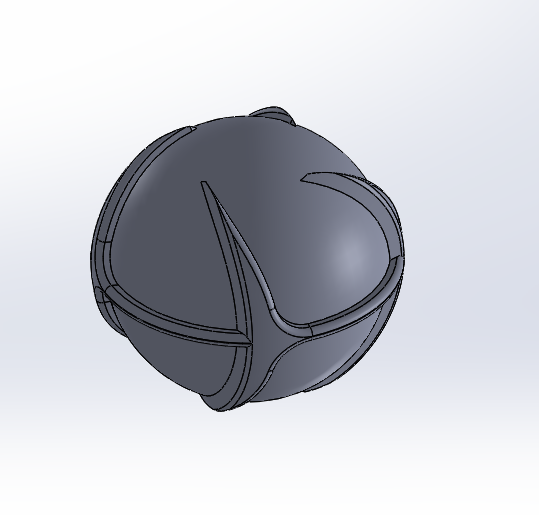} &
\includegraphics[width=2.2in]{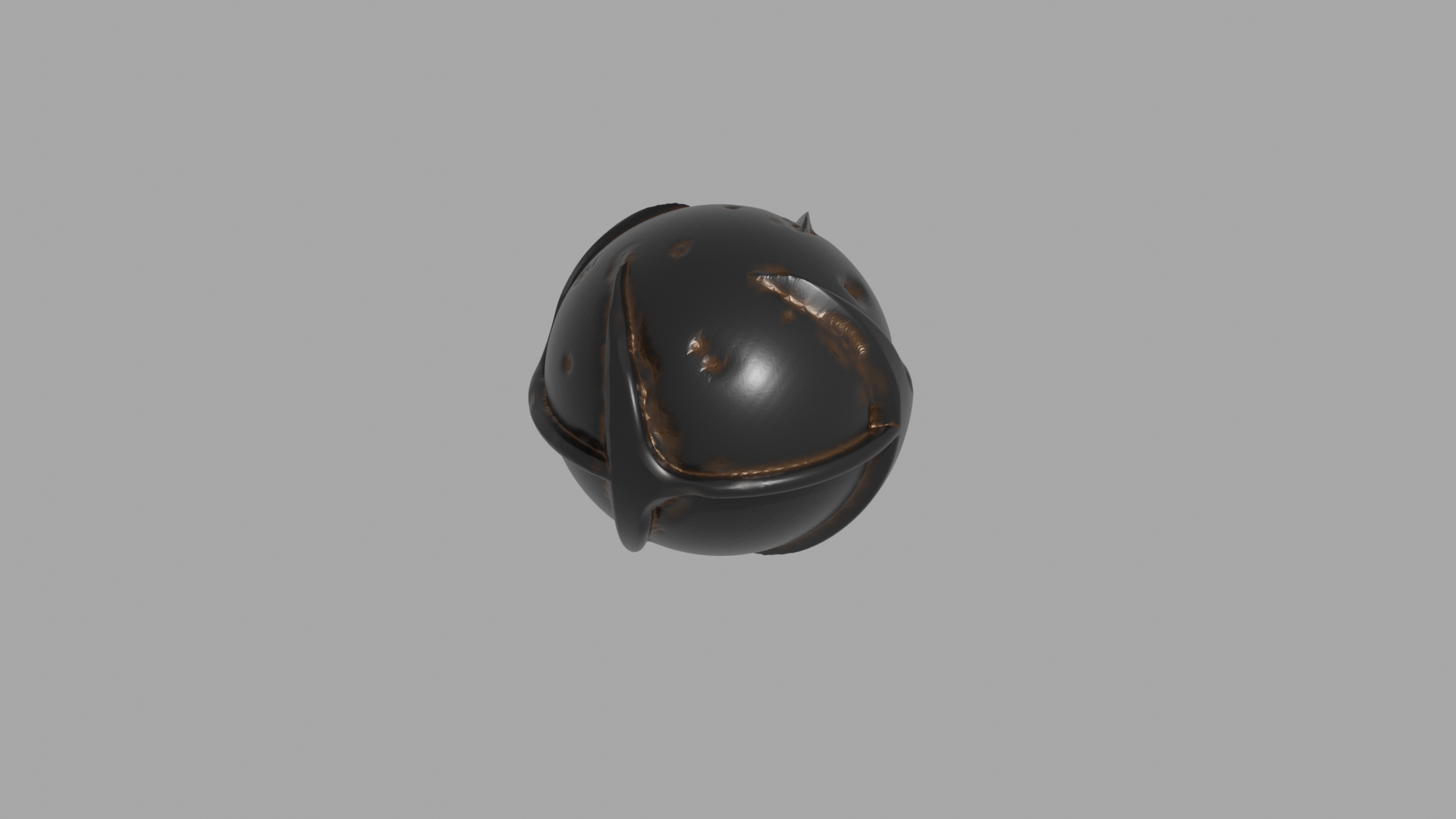} \\
\end{array}$
\end{center}
\caption{{A CAD drawing and associated rendered image of a BLU-61 submunition with a mirror symmetry as well as a 4--fold rotational symmetry and, hence, when suitably orientated, is characterised by the dihedral  ${\mathfrak D}_4$ group.}} \label{fig:blu61} 
\end{figure}

The results in the aforementioned tables described in Remark~\ref{tab:grptab} can be used to easily identify the invariant HGPT coefficients. For example, considering ${\vec S}_{pq}^{\mathfrak G}$ for $p=q=1$ and the  ${\mathfrak C}_2$ group, we can identify that 
\begin{align}
{\vec S}_{11}^{{\mathfrak C}_2} = \left \{  x_1 y_1, x_1 y_2 +x_2 y_1, x_2 y_2, x_3 y_3 \right \}, \nonumber
\end{align}
so this means that the non-zero independent HGPT coefficients are 
\begin{align}
M_{1,-1,1,-1}^{\text{H}}, M_{1,-1,1,0}^{\text{H}}=M_{1,0,1,-1}^{\text{H}} , M_{1,0,1,0}^{\text{H}}, M_{1,1,1,1}^{\text{H}}.
\end{align}
Similarly, for $S_{11}^{{\mathfrak C}_4}$ considered in Section~\ref{sect:example}, the non-zero independent HGPT coefficients are 
\begin{align}
M_{1,-1,1,-1}^{\text{H}}= M_{1,0,1,0}^{\text{H}}, M_{1,1,1,1}^{\text{H}}.
\end{align}

{Based on Section~\ref{sect:spharmoics} we can compare the number of independent coefficients of GPTs, HGPTs and CGPTs. Without assuming any object symmetries, a GPT, with orders $p$, $q$, has $(p+1)(p+2)(q+1)(q+2)/4$ independent coefficients for a standard monomial basis. However, the case of $p=q=1$ is  special as the monomial basis agrees (up to some possible scaling) with a harmonic basis in this case. If a harmonic polynomial basis is used, we have a symmetric HGPT with $(2p+1)(2q+1)$ coefficients (again without assuming object symmetries) and, in the case of $p=q$, by using Remark~\ref{remark:indhgpt},  the symmetry of the HGPT implies $ (2p+1)(p+1)$, which is fewer than $(2p+1)^2$. The CGPT, which uses a harmonic function basis, has the same number of independent coefficients as the HGPT. The advantage of the HGPT is that, if we know that the object is characterised by a particular symmetry group, we can use Algorithm~\ref{alg:subspace} to automatically  reduce the number of independent coefficients required for a HGPT. Once this has been achieved, the non-zero independent HGPT coefficients can be transformed to coefficients of another basis (for example the coefficients of CGPTs using (\ref{eqn:hgpt1})). Following this transformation, the reduced number of independent coefficients will remain the same. As an example, we compare in Table~\ref{tab:basisred} the number of independent coefficients of GPTs without assuming object symmetries (using a basis of standard monomials); CGPTs/HGPTs including tensor symmetries, but without assuming object symmetries (using a basis of harmonic functions and  polynomials, respectively), and the reduced dimension HGPTs, obtained by additionally taking account a cyclic ${\mathfrak C}_2$ group   symmetry group. Similar reductions also apply for other symmetry groups. }

\begin{table}
\begin{center}
\begin{tabular}{|c|c|c|c|c|}
\hline
$p $ & $q$ & GPT& Symmetric CGPT / HGPT  & Reduced dimension HGPT \\
& {} &  $M_{\alpha \beta}$  & $M^C_{qjpi}=M^C_{piqj}$, $M^H_{qjpi}=M^H_{piqj}$ & $\dim {\vec S}_{pq}^{{\mathfrak C}_2}$ \\
& {} &$|\alpha|=p$, $|\beta|=q$ &  $-p \le i \le p, -q \le j\le q$ &{} \\
\hline
1 & 1 & 9 (6) &  6  & 4\\
\hline
1 & 2 & 18 & 15  & 7 \\
\hline
1 & 3 & 30 & 21 & 11 \\ 
\hline
2 & 2 & 36 & 15 & 9 \\
\hline
\end{tabular}
\end{center}
\caption{{Comparison of the number of non-zero independent coefficients: GPTs without assuming object symmetries (using a standard monomial basis); CGPTs/HGPTs including tensor symmetries, but without assuming object symmetries (using a basis of harmonic functions and polynomials, respectively), and reduced dimension HGPTs, obtained by additionally taking account of the object symmetries, assuming the object is characterised by the ${\mathfrak C}_2$ group. The results are quoted for different orders $p$ and $q$. Note that for $p=q=1$ the standard monomial basis agrees with the harmonic polynomial basis  (up to some possible scaling) leading to the reduced dimension quoted in brackets in this case.} }\label{tab:basisred}
\end{table}

\begin{remark}
Related to the question of finding the equivalent class of objects that an (H)GPT of a given order describes,   raised in the introductory remarks of 
Section~\ref{sect:sym}, a further practical problem is to be able to  find objects (e.g UXO such as mortar bombs) with a given symmetry. In practical measurements,  the measured $V_{\text{sr}}$  will contain errors and unavoidable noise that are associated with measurements, which are not included in (\ref{eqn:Vsrharmonic}). Still further, buried objects are often dented and deformed as a result of falling from a height to the ground so that a hidden object's symmetries may only hold approximately in practice, as Figure~\ref{fig:uxo} illustrates.
With (machine learning) object classification in mind, the sets of non-zero independent HGPT coefficients for different symmetry groups offer a possible alternative  to object classification based on shape invariant descriptors proposed in~\cite{ammari3dinvgpt}. Here, it is envisaged that a classifier could be developed based on classifying objects according to their symmetry groups with the non-zero features being those independent HGPT coefficients for the symmetry group under consideration. 
Thus, offering the potential to detect objects of a certain cyclic (or dihedral) group up to the measurement error and errors in the object symmetry as well as contributing to understanding the additional information that higher order (H)GPTs provide.

\end{remark}

\begin{table}
\begin{center}
\begin{tabular}{|l|l|l|l|}
\hline
 $p$ &  $q$ & $\dim {\vec S}_{pq}^{{\mathfrak C}_2}$  & ${\vec S}_{pq}^{{\mathfrak C}_2}$  \\
\hline
1 & 1 & 4 & $\{x_1 y_1,$\\
{} & {} & {} & $x_2 y_1 + x_1 y_2,$\\
{} & {} & {}  & $x_2 y_2,$\\
{} & {} & {}  & $x_3y_3\}$ \\
\hline
1 & 2 & 7 & $\{x_1 x_3 y_1 + x_1 y_1 y_3,$\\
{} & {} & {}  &$x_2 x_3 y_1 + x_1 y_2 y_3,$\\
{} & {} & {}  &$x_1 x_3 y_2 + x_2 y_1 y_3,$\\
{} & {} & {}  &$x_2 x_3 y_2 + x_2 y_2 y_3,$\\
{} & {} & {}  &$x_3 (y_1^2 - y_2^2) + (x_1^2 - x_2^2) y_3,$\\
{} & {} & {}  &$(x_1^2 - x_3^2) y_3 + x_3 (y_1^2 - y_3^2),$\\
{} & {} & {}  &$x_3 y_1 y_2 + x_1 x_2 y_3\}$\\
\hline
1 & 3 & 11 & $\{(x_1^3 - 3 x_1 x_2^2) y_1 + x_1 (y_1^3 - 3 y_1 y_2^2),$ \\
{} & {} & {}  &$(-3 x_1^2 x_2 + x_2^3) y_1 + x_1 (-3 y_1^2 y_2 + y_2^3),$\\
{} & {} & {}  &$(x_1^3 - 3 x_1 x_3^2) y_1 + x_1 (y_1^3 - 3 y_1 y_3^2),$\\
{} & {} & {}  &$(x_2^3 - 3 x_2 x_3^2) y_1 + x_1 (y_2^3 - 3 y_2 y_3^2),$\\
{} & {} & {}  &$(x_1^3 - 3 x_1 x_2^2) y_2 + x_2 (y_1^3 - 3 y_1 y_2^2),$\\
{} & {} & {}  &$(-3 x_1^2 x_2 + x_2^3) y_2 + x_2 (-3 y_1^2 y_2 + y_2^3),$\\
{} & {} & {}  &$(x_1^3 - 3 x_1 x_3^2) y_2 + x_2 (y_1^3 - 3 y_1 y_3^2),$\\
{} & {} & {}  &$(x_2^3 - 3 x_2 x_3^2) y_2 + x_2 (y_2^3 - 3 y_2 y_3^2),$\\
{} & {} & {}  &$(-3 x_1^2 x_3 + x_3^3) y_3 + x_3 (-3 y_1^2 y_3 + y_3^3),$\\
{} & {} & {}  &$(-3 x_2^2 x_3 + x_3^3) y_3 + x_3 (-3 y_2^2 y_3 + y_3^3),$\\
{} & {} & {}  &$x_1 x_2 x_3 y_3 + x_3 y_1 y_2 y_3\}$\\
\hline
2 & 2& 9 &$ \{(x_1^2 - x_2^2) (y_1^2 - y_2^2),$\\
{} & {} & {}  &$(x_1^2 - x_3^2) (y_1^2 - y_2^2) + (x_1^2 - x_2^2) (y_1^2 - y_3^2),$ \\
{} & {} & {}  &$(x_1^2 - x_2^2) y_1 y_2 + x_1 x_2 (y_1^2 - y_2^2),$ \\
{} & {} & {}  &$(x_1^2 - x_3^2) (y_1^2 - y_3^2),$ \\
{} & {} & {}  &$(x_1^2 - x_3^2) y_1 y_2 + x_1 x_2 (y_1^2 - y_3^2),$ \\
{} & {} & {}  &$x_1 x_2 y_1 y_2,$ \\
{} & {} & {}  &$x_1 x_3 y_1 y_3,$ \\
{} & {} & {}  &$x_2 x_3 y_1 y_3 + x_1 x_3 y_2 y_3,$ \\
{} & {} & {}  &$x_2 x_3 y_2 y_3\}$ \\
\hline
\end{tabular}
\end{center}
\caption{The dimension and set of symmetric product harmonic polynomials ${\vec S}_{pq}^{\mathfrak G}$ fixed by the ${\mathfrak G}={\mathfrak C}_2$ group.} \label{tab:symc2}
\end{table}

\begin{table}
\begin{center}
\begin{tabular}{|l|l|l|l|}
\hline
 $p$ &  $q$ & $\dim {\vec S}_{pq}^{{\mathfrak C}_3}$  & ${\vec S}_{pq}^{{\mathfrak C}_3}$  \\
\hline
1 & 1 & 2 & $\{x_1 y_1 +  x_2 y_2,$\\
{} & {} & {} & $ x_3 y_3\}$ \\
\hline
1 & 2 & 5 &  $\{- (x_1 x_2 y_2 + x_2 y_1 y_2) + 1/2 ((x_1^2 - x_2^2) y_1 + x_1 (y_1^2 - y_2^2)),$ \\
{} & {} & {}  &  $ (x_1 x_2 y_1 + x_1 y_1 y_2) + 1/2 ((x_1^2 - x_2^2) y_2 + x_2 (y_1^2 - y_2^2)),$\\
{} & {} & {}  &  $ (x_1 x_3 y_1 + x_1 y_1 y_3) +  (x_2 x_3 y_2 + x_2 y_2 y_3),$ \\
{} & {} & {}  &  $- (x_1 x_3 y_2 + x_2 y_1 y_3) +  (x_2 x_3 y_1 + x_1 y_2 y_3),$ \\
{} & {} & {}  &  $ -(1/2) (x_3 (y_1^2 - y_2^2) + (x_1^2 - x_2^2) y_3) +  ((x_1^2 - x_3^2) y_3 + x_3 (y_1^2 - y_3^2))\}$ \\
\hline
1 & 3& 7 & $\{1/6 (-((x_1^3 - 3 x_1 x_2^2) y_1) - x_1 (y_1^3 - 3 y_1 y_2^2)) + $\\
 {} & {} & {}  & $1/6 (-((-3 x_1^2 x_2 + x_2^3) y_2) - x_2 (-3 y_1^2 y_2 + y_2^3)) + $\\
 {} & {} & {}  & $2/3 ((x_1^3 - 3 x_1 x_3^2) y_1 + x_1 (y_1^3 - 3 y_1 y_3^2)) + $\\
{} & {} & {}  & $ 2/3 ((x_2^3 - 3 x_2 x_3^2) y_2 + x_2 (y_2^3 - 3 y_2 y_3^2))$,\\
{} & {} & {}  & $x_1 x_2 x_3 y_2 + x_2 y_1 y_2 y_3 + $\\
{} & {} & {}  & $ 1/6 ((-3 x_1^2 x_3 + x_3^3) y_1 + x_1 (-3 y_1^2 y_3 + y_3^3)) + $\\
{} & {} & {}  & $ 1/6 (-((-3 x_2^2 x_3 + x_3^3) y_1) - x_1 (-3 y_2^2 y_3 + y_3^3))$,\\
{} & {} & {}  & $3/8 ((x_1^3 - 3 x_1 x_2^2) y_2 + x_2 (y_1^3 - 3 y_1 y_2^2)) - $\\
 {} & {} & {}  & $3/8 ((-3 x_1^2 x_2 + x_2^3) y_1 + x_1 (-3 y_1^2 y_2 + y_2^3)) - $\\
 {} & {} & {}  & $3/2 ((x_1^3 - 3 x_1 x_3^2) y_2 + x_2 (y_1^3 - 3 y_1 y_3^2)) + $\\
{} & {} & {}  & $ 3/2 ((x_2^3 - 3 x_2 x_3^2) y_1 + x_1 (y_2^3 - 3 y_2 y_3^2))$,\\
{} & {} & {}  & $x_1 x_2 x_3 y_1 + x_1 y_1 y_2 y_3 +$\\
{} & {} & {}  & $  1/6 (-((-3 x_1^2 x_3 + x_3^3) y_2) - x_2 (-3 y_1^2 y_3 + y_3^3)) + $\\
{} & {} & {}  & $ 1/6 ((-3 x_2^2 x_3 + x_3^3) y_2 + x_2 (-3 y_2^2 y_3 + y_3^3))$,\\
{} & {} & {}  & $x_3 (y_1^3 - 3 y_1 y_2^2) + (x_1^3 - 3 x_1 x_2^2) y_3$,\\
{} & {} & {}  & $x_3 (-3 y_1^2 y_2 + y_2^3) + (-3 x_1^2 x_2 + x_2^3) y_3$,\\
{} & {} & {}  & $1/2 ((-3 x_1^2 x_3 + x_3^3) y_3 + x_3 (-3 y_1^2 y_3 + y_3^3)) + $\\
{} & {} & {}  & $ 1/2 ((-3 x_2^2 x_3 + x_3^3) y_3 + x_3 (-3 y_2^2 y_3 + y_3^3))\}$\\
 \hline
  2 & 2& 5 & $\{12 x_1 x_2 y_1 y_2 + 3 (x_1^2 - x_2^2) (y_1^2 - y_2^2),$\\
   {} & {} & {}  & $1/2 (x_1 x_3 (y_1^2 - y_2^2) + (x_1^2 - x_2^2) y_1 y_3) -   (x_2 x_3 y_1 y_2 + x_1 x_2 y_2 y_3),$\\
     {} & {} & {}  & $ (x_1 x_3 y_1 y_2 + x_1 x_2 y_1 y_3) +  1/2 (x_2 x_3 (y_1^2 - y_2^2) + (x_1^2 - x_2^2) y_2 y_3),$\\
       {} & {} & {}  & $ x_1 x_2 y_1 y_2 + 3/4 (x_1^2 - x_2^2) (y_1^2 - y_2^2) + $\\
{} & {} & {}  & $ 2(x_1^2 - x_3^2) (y_1^2 - y_3^2) -  ((x_1^2 - x_3^2) (y_1^2 - y_2^2) + (x_1^2 - x_2^2) (y_1^2 - y_3^2)),$\\
         {} & {} & {}  & $ x_1 x_3 y_1 y_3 +  x_2 x_3 y_2 y_3\}$\\
         \hline
\end{tabular}
\end{center}
\caption{The dimension and set of symmetric product harmonic polynomials ${\vec S}_{pq}^{\mathfrak G}$ fixed by the ${\mathfrak G}={\mathfrak C}_3$ group.}  \label{tab:symc3}
\end{table}

\begin{table}
\begin{center}
\begin{tabular}{|l|l|l|l|}
\hline
 $p$ &  $q$ & $\dim {\vec S}_{pq}^{{\mathfrak C}_4}$  & ${\vec S}_{pq}^{{\mathfrak C}_4}$   \\
\hline
1 & 1 & 2 &  $\{x_1 y_1 + x_2 y_2,$\\
{} & {} & {}  &$x_3 y_3\}$\\
\hline
1 & 2 & 3 & $\{(x_1 x_3 y_1 + x_1 y_1 y_3) +  (x_2 x_3 y_2 + x_2 y_2 y_3),$\\
{} & {} & {}  &$- (x_1 x_3 y_2 + x_2 y_1 y_3) +  (x_2 x_3 y_1 + x_1 y_2 y_3),$\\
{} & {} & {}  &$-
 (x_3 (y_1^2 - y_2^2) + (x_1^2 - x_2^2) y_3) + 
 2 ((x_1^2 - x_3^2) y_3 + x_3 (y_1^2 - y_3^2))\}$\\
 \hline
 1 & 3 & 5 & $\{((x_1^3 - 3 x_1 x_2^2) y_1 + x_1 (y_1^3 - 3 y_1 y_2^2)) + $\\
 {} & {} & {}  &$  ((-3 x_1^2 x_2 + x_2^3) y_2 + x_2 (-3 y_1^2 y_2 + y_2^3)),$\\
  {} & {} & {}  &$ - ((x_1^3 - 3 x_1 x_2^2) y_2 + x_2 (y_1^3 - 3 y_1 y_2^2)) + $\\
{} & {} & {}  &$   ((-3 x_1^2 x_2 + x_2^3) y_1 + x_1 (-3 y_1^2 y_2 + y_2^3)),$\\
{} & {} & {}  &$  ((x_1^3 - 3 x_1 x_3^2) y_1 + x_1 (y_1^3 - 3 y_1 y_3^2)) + $\\
 {} & {} & {}  &$  ((x_2^3 - 3 x_2 x_3^2) y_2 + x_2 (y_2^3 - 3 y_2 y_3^2)),$\\
 {} & {} & {}  &$- ((x_1^3 - 3 x_1 x_3^2) y_2 + x_2 (y_1^3 - 3 y_1 y_3^2)) + $\\
 {} & {} & {}  &$   ((x_2^3 - 3 x_2 x_3^2) y_1 + x_1 (y_2^3 - 3 y_2 y_3^2)),$\\
 {} & {} & {}  &$ ((-3 x_1^2 x_3 + x_3^3) y_3 + x_3 (-3 y_1^2 y_3 + y_3^3)) + $\\
{} & {} & {}  &$   ((-3 x_2^2 x_3 + x_3^3) y_3 + x_3 (-3 y_2^2 y_3 + y_3^3)\}$\\
\hline
 2 & 2 & 5 & $\{(x_1^2 - x_2^2) (y_1^2 - y_2^2),$\\
 {} & {} & {}  &$  ((x_1^2 - x_2^2) y_1 y_2 + x_1 x_2 (y_1^2 - y_2^2)),$\\
 {} & {} & {}  &$(x_1^2 - x_2^2) (y_1^2 - y_2^2) + 2 (x_1^2 - x_3^2) (y_1^2 - y_3^2) -$\\ 
 {} & {} & {}  &$  ((x_1^2 - x_3^2) (y_1^2 - y_2^2) + (x_1^2 - x_2^2) (y_1^2 - y_3^2)),$\\
 {} & {} & {}  &$x_1 x_2 y_1 y_2,$\\
 {} & {} & {}  &$x_1 x_3 y_1 y_3 +  x_2 x_3 y_2 y_3\}$\\
 \hline
\end{tabular}
\end{center}
\caption{The dimension and set of symmetric product harmonic polynomials ${\vec S}_{pq}^{\mathfrak G}$ fixed by the ${\mathfrak G}={\mathfrak C}_4$ group.}  \label{tab:symc4}
\end{table}

\begin{table}
\begin{center}
\begin{tabular}{|l|l|l|l|}
\hline
 $p$ &  $q$ & $\dim {\vec S}_{pq}^{{\mathfrak C}_5}$  & ${\vec S}_{pq}^{{\mathfrak C}_5}$   \\
\hline
1 & 1 & 2 &$ \{ x_1 y_1 + x_2 y_2,$\\
{} & {} & {} &$  x_3 y_3\} $\\
\hline
1 & 2 & 2 & $\{ 1/2 (x_1 x_3 y_1 + x_1 y_1 y_3) + 1/2 (x_2 x_3 y_2 + x_2 y_2 y_3), $\\
{} & {} & {} &$-(5/2) (x_1 x_3 y_2 + x_2 y_1 y_3) + 5/2 (x_2 x_3 y_1 + x_1 y_2 y_3),$\\
{} & {} & {} &$-x_3 (y_1^2 - y_2^2) - (x_1^2 - x_2^2) y_3 + 
 2 ((x_1^2 - x_3^2) y_3 + x_3 (y_1^2 - y_3^2))\} $\\
 \hline
 1 & 3 & 3 &$\{ 1/6 (-((x_1^3 - 3 x_1 x_2^2) y_1) - x_1 (y_1^3 - 3 y_1 y_2^2)) +$\\
  {} & {} & {} &$ 1/6 (-((-3 x_1^2 x_2 + x_2^3) y_2) - x_2 (-3 y_1^2 y_2 + y_2^3)) + $\\
{} & {} & {} &$ 2/3 ((x_1^3 - 3 x_1 x_3^2) y_1 + x_1 (y_1^3 - 3 y_1 y_3^2)) + $\\
{} & {} & {} &$ 2/3 ((x_2^3 - 3 x_2 x_3^2) y_2 + x_2 (y_2^3 - 3 y_2 y_3^2)),$\\
{} & {} & {} &$ 5/8 ((x_1^3 - 3 x_1 x_2^2) y_2 + x_2 (y_1^3 - 3 y_1 y_2^2)) - $\\
{} & {} & {} &$ 5/8 ((-3 x_1^2 x_2 + x_2^3) y_1 + x_1 (-3 y_1^2 y_2 + y_2^3)) -  $\\
{} & {} & {} &$ 5/2 ((x_1^3 - 3 x_1 x_3^2) y_2 + x_2 (y_1^3 - 3 y_1 y_3^2)) +  $\\
{} & {} & {} &$5/2 ((x_2^3 - 3 x_2 x_3^2) y_1 + x_1 (y_2^3 - 3 y_2 y_3^2)),$\\
{} & {} & {} &$1/2 ((-3 x_1^2 x_3 + x_3^3) y_3 + x_3 (-3 y_1^2 y_3 + y_3^3)) + $\\
{} & {} & {} &$ 1/2 ((-3 x_2^2 x_3 + x_3^3) y_3 + x_3 (-3 y_2^2 y_3 + y_3^3))\}$\\
\hline
2 & 2 & 3 & $\{ 8/5 x_1 x_2 y_1 y_2 + 2/5 (x_1^2 - x_2^2) (y_1^2 - y_2^2)$,\\
{} & {} & {} &$ 8/7 x_1 x_2 y_1 y_2 + 6/7 (x_1^2 - x_2^2) (y_1^2 - y_2^2) +  16/7 (x_1^2 - x_3^2) (y_1^2 - y_3^2) - $\\
{} & {} & {} &$ 8/7 ((x_1^2 - x_3^2) (y_1^2 - y_2^2) + (x_1^2 - x_2^2) (y_1^2 - y_3^2)),$\\
{} & {} & {} &$x_1 x_3 y_1 y_3 + x_2 x_3 y_2 y_3\} $\\
\hline
\end{tabular}
\end{center}
\caption{The dimension and set of symmetric product harmonic polynomials ${\vec S}_{pq}^{\mathfrak G}$ fixed by the ${\mathfrak G}={\mathfrak C}_5$ group.}  \label{tab:symc5}
\end{table}

\begin{table}
\begin{center}
\begin{tabular}{|l|l|l|l|}
\hline
 $p$ &  $q$ & $\dim {\vec S}_{pq}^{{\mathfrak C}_6}$  & ${\vec S}_{pq}^{{\mathfrak C}_6}$ \\
\hline
1 & 1 & 2 &$ \{ x_1 y_1 + x_2 y_2,$\\
{} & {} & {} &$  x_3 y_3\} $\\
\hline
1 & 2 & 3 & $\{1/2 (x_1 x_3 y_1 + x_1 y_1 y_3) + 1/2 (x_2 x_3 y_2 + x_2 y_2 y_3),$\\
{} & {} & {} &$-3 (x_1 x_3 y_2 + x_2 y_1 y_3) + 3 (x_2 x_3 y_1 + x_1 y_2 y_3)$,\\
{} & {} & {} &$-x_3 (y_1^2 - y_2^2) - (x_1^2 - x_2^2) y_3 + 
 2 ((x_1^2 - x_3^2) y_3 + x_3 (y_1^2 - y_3^2))\} $\\
\hline
1 & 3 & 3 & $\{ 1/6 (-((x_1^3 - 3 x_1 x_2^2) y_1) - x_1 (y_1^3 - 3 y_1 y_2^2)) + $\\
 {} & {} & {} &$1/6 (-((-3 x_1^2 x_2 + x_2^3) y_2) - x_2 (-3 y_1^2 y_2 + y_2^3)) + $\\
{} & {} & {} &$ 2/3 ((x_1^3 - 3 x_1 x_3^2) y_1 + x_1 (y_1^3 - 3 y_1 y_3^2)) + $\\
{} & {} & {} &$ 2/3 ((x_2^3 - 3 x_2 x_3^2) y_2 + x_2 (y_2^3 - 3 y_2 y_3^2))$,\\
{} & {} & {} &$3/4 ((x_1^3 - 3 x_1 x_2^2) y_2 + x_2 (y_1^3 - 3 y_1 y_2^2)) - $\\
 {} & {} & {} &$3/4 ((-3 x_1^2 x_2 + x_2^3) y_1 + x_1 (-3 y_1^2 y_2 + y_2^3)) - $\\
 {} & {} & {} &$3 ((x_1^3 - 3 x_1 x_3^2) y_2 + x_2 (y_1^3 - 3 y_1 y_3^2)) + $\\
  {} & {} & {} &$3 ((x_2^3 - 3 x_2 x_3^2) y_1 + x_1 (y_2^3 - 3 y_2 y_3^2))$,\\
{} & {} & {} &$1/2 ((-3 x_1^2 x_3 + x_3^3) y_3 + x_3 (-3 y_1^2 y_3 + y_3^3)) + $\\
  {} & {} & {} &$1/2 ((-3 x_2^2 x_3 + x_3^3) y_3 + x_3 (-3 y_2^2 y_3 + y_3^3))\} $\\
\hline
2 & 2 & 3 & $\{8/5 x_1 x_2 y_1 y_2 + 2/5 (x_1^2 - x_2^2) (y_1^2 - y_2^2)$,\\
{} & {} & {} &$8/7 x_1 x_2 y_1 y_2 + 6/7 (x_1^2 - x_2^2) (y_1^2 - y_2^2) + $\\
{} & {} & {} & $16/7 (x_1^2 - x_3^2) (y_1^2 - y_3^2) - $\\
{} & {} & {} & $8/7 ((x_1^2 - x_3^2) (y_1^2 - y_2^2) + (x_1^2 - x_2^2) (y_1^2 - y_3^2))$,\\
{} & {} & {} &$x_1 x_3 y_1 y_3 + x_2 x_3 y_2 y_3\}$\\
\hline
\end{tabular}
\end{center}
\caption{The dimension and set of symmetric product harmonic polynomials ${\vec S}_{pq}^{\mathfrak G}$ fixed by the ${\mathfrak G}={\mathfrak C}_6$ group.}    \label{tab:symc6}
\end{table}

\section*{Acknowledgement}
Paul D. Ledger gratefully acknowledges the financial support received from EPSRC in the form of grants  EP/V049453/1 and EP/V009028/1. William R. B. Lionheart gratefully acknowledges the financial support received from EPSRC in the form of grants  EP/V049496/1 and EP/V009109/1 and would like to thank the Royal Society for the financial support received from a Royal Society Wolfson Research Merit Award and  a Royal Society  Global Challenges Research Fund grant CH160063. {The authors are grateful to Daniel Conniffe of The University of Manchester for creating the images shown in Figure~\ref{fig:blu61}}.

\appendix
\section{Results from Representation Theory} \label{sect:app}

For those unfamiliar with representation theory, some key results that are relevant for our work and accompanying references are provided below. 

Let ${\mathfrak G}$  be a finite group with representation $( {\vec \pi} , {\vec V})$ where ${\vec V}$ is a  vector space  of dimension $n$ with field $K$, $K$ having characteristic ${\mathbb R}$ or ${\mathbb C}$, and ${\vec \pi}$ is a homomorphism from ${\mathfrak G}$ to the group of invertible linear transformations of ${\vec V}$. We say ${\vec v} \in {\vec V} $ is fixed by ${\mathfrak G}$ if
\begin{equation}
 {\vec \pi } ({\mathbf G}) ({\vec v}) = {\vec v} \qquad \forall {\mathbf G} \in {\mathfrak G}. \nonumber
\end{equation}
The set of all elements fixed by ${\mathfrak G}$ is 
\begin{equation}
{\vec V}^{\mathfrak G} = \left \{  {\vec v} \in {\vec V} : {\vec \pi } ({\mathbf G})({\vec v})={\vec v} \ \forall {\mathbf G} \in {\mathfrak G}
\right \}, \nonumber
\end{equation}
which is a subspace of ${\vec V}$.

We denote by
\begin{equation}
{\mathbf M}_\pi : = \frac{1}{|{\mathfrak G}|} \sum_{{\mathbf G} \in {\mathfrak G}} {\vec \pi } ({\mathbf G}) , \label{eqn:avgrep}
\end{equation}
the average matrix in the representation.

From the definition (\ref{eqn:avgrep}) it follows that ${\vec \pi }({\mathbf G}){\mathbf M}_\pi = {\mathbf M}_\pi$ for all ${\mathbf G} \in {\mathfrak G}$, as
multiplying all the elements of a group by a fixed element simply reorders the element. Hence, ${\mathbf M}_\pi^2 ={\mathbf M}_\pi$ and ${\mathbf M}_\pi$ is a projection~\cite{lax}[pg. 30, pg. 84] on to ${\vec V}^{\mathfrak G}$. This clearly means that the eigenvalues of ${\mathbf M}_\pi$ can only be $1$ or $0$, but we can diagonalise the matrix ${\mathbf M}_\pi$ using the following change of basis. Let ${\vec v}_1, \ldots, {\vec v}_m$ be a basis for ${\vec V}^{\mathfrak G}$ so that ${\mathbf M}_\pi {\vec v}_i = {\vec v}_i$.  Now let ${\vec v}_{m+1}, \ldots, {\vec v}_n$ be a basis for the null space of ${\mathbf M}_\pi$ so that ${\mathbf M}_\pi {\vec v}_i= {\vec 0} $ for $i > m$. So the matrix of ${\mathbf M}_\pi$ in this new basis is
\begin{equation}
\tilde{\mathbf M}_\pi = \left (  \begin{array}{ll} {\mathbf I}_{m \times m}  & {\mathbf 0}_{m \times (n-m)} \\
 {\mathbf 0}_{ (n-m)\times m}  &  {\mathbf 0}_{(n-m )\times(n-m)}  \end{array} \right ). \label{eqn:avgreptil}
\end{equation}
Let ${\vec y}_i$, $i=1,\ldots,n$ be any basis for ${\mathbf V}$ and ${\vec w}_i  = {\mathbf M}_\pi {\vec y}_i$. As the range of ${\mathbf M}_\pi$ is exactly ${\vec V}^{\mathfrak G}$ it follows that 

\begin{lemma}
 ${\vec V}^{\mathfrak G}$ is spanned by  ${\vec w}_i$, $i=1,\ldots,n$.
\end{lemma}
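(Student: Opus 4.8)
The plan is to exploit the fact that $\mathbf{M}_\pi$ is a projection onto $\mathbf{V}^{\mathfrak G}$, which has already been established in the paragraph preceding the lemma. First I would note that since $\mathbf{M}_\pi^2 = \mathbf{M}_\pi$ and the range of $\mathbf{M}_\pi$ equals $\mathbf{V}^{\mathfrak G}$, every vector $\mathbf{w}_i = \mathbf{M}_\pi \mathbf{y}_i$ lies in $\mathbf{V}^{\mathfrak G}$; indeed $\mathbf{M}_\pi \mathbf{w}_i = \mathbf{M}_\pi^2 \mathbf{y}_i = \mathbf{M}_\pi \mathbf{y}_i = \mathbf{w}_i$, so each $\mathbf{w}_i$ is fixed by $\mathfrak G$. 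Hence $\operatorname{span}\{\mathbf{w}_1,\ldots,\mathbf{w}_n\} \subseteq \mathbf{V}^{\mathfrak G}$.

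For the reverse inclusion, take any $\mathbf{v} \in \mathbf{V}^{\mathfrak G}$. Since $\{\mathbf{y}_i\}$ is a basis for $\mathbf{V}$, we can write $\mathbf{v} = \sum_i c_i \mathbf{y}_i$ for scalars $c_i \in K$. Applying $\mathbf{M}_\pi$ and using that $\mathbf{M}_\pi$ fixes $\mathbf{v}$ (because $\mathbf{v} \in \mathbf{V}^{\mathfrak G}$ and $\mathbf{M}_\pi$ acts as the identity on $\mathbf{V}^{\mathfrak G}$), we obtain
\begin{equation}
\mathbf{v} = \mathbf{M}_\pi \mathbf{v} = \sum_i c_i \mathbf{M}_\pi \mathbf{y}_i = \sum_i c_i \mathbf{w}_i, \nonumber
\end{equation}
so $\mathbf{v} \in \operatorname{span}\{\mathbf{w}_1,\ldots,\mathbf{w}_n\}$. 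Combining the two inclusions gives $\mathbf{V}^{\mathfrak G} = \operatorname{span}\{\mathbf{w}_1,\ldots,\mathbf{w}_n\}$.

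There is no real obstacle here; the proof is essentially a two-line consequence of the idempotency of $\mathbf{M}_\pi$ and the identification of its range with $\mathbf{V}^{\mathfrak G}$. The only point to state carefully is that $\mathbf{M}_\pi$ restricted to $\mathbf{V}^{\mathfrak G}$ is the identity, which follows from $\mathbf{V}^{\mathfrak G}$ being precisely the $1$-eigenspace of the projection $\mathbf{M}_\pi$ (equivalently, from $\mathbf{M}_\pi \mathbf{v}_i = \mathbf{v}_i$ for the basis $\mathbf{v}_1,\ldots,\mathbf{v}_m$ of $\mathbf{V}^{\mathfrak G}$ exhibited just above via the block form $\tilde{\mathbf{M}}_\pi$). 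Note also that of course only $m = \dim \mathbf{V}^{\mathfrak G}$ of the $\mathbf{w}_i$ are independent, which is exactly what Algorithm~\ref{alg:getsymprodbasis} uses: one extracts the first $m$ independent elements of $\mathbf{M}_\pi \vec{S}_{pq}$.
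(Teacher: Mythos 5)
Your proof is correct and follows essentially the same route as the paper, which simply observes that the range of ${\mathbf M}_\pi$ is exactly ${\vec V}^{\mathfrak G}$ (so the images of any basis of ${\vec V}$ span it); you merely spell out the two inclusions explicitly. The only imprecise aside is the clause ``${\mathbf M}_\pi {\vec w}_i = {\vec w}_i$, so each ${\vec w}_i$ is fixed by ${\mathfrak G}$'' --- being fixed by the averaging operator is not by itself the same as being fixed by every group element; the correct justification (which you also give) is that ${\vec w}_i$ lies in the range of ${\mathbf M}_\pi$, which equals ${\vec V}^{\mathfrak G}$ because ${\vec \pi}({\mathbf G}){\mathbf M}_\pi = {\mathbf M}_\pi$ for all ${\mathbf G} \in {\mathfrak G}$.
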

Also, as the trace of matrix is invariant under a change of basis  $\Tr ({\mathbf M}_\pi) = \Tr (\tilde{\mathbf M}_\pi)$ ~\cite{lax}[Thm. 9, pg. 56] and we have
\begin{lemma}
\begin{equation}
m = \dim {\vec V}^{\mathfrak G} =\Tr (  {\mathbf M}_\pi  ). \nonumber
\end{equation}
\end{lemma}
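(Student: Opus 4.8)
The plan is to deduce the identity purely from facts already assembled in the text, plus one standard fact of linear algebra, namely that the trace is invariant under a change of basis. First I would recall two things established in the lines preceding (\ref{eqn:avgreptil}): that ${\mathbf M}_\pi$ is idempotent, ${\mathbf M}_\pi^2={\mathbf M}_\pi$ (from the group-reordering argument ${\vec\pi}({\mathbf G}){\mathbf M}_\pi={\mathbf M}_\pi$), and that its range is exactly the fixed subspace ${\vec V}^{\mathfrak G}$. Idempotency forces the minimal polynomial of ${\mathbf M}_\pi$ to divide $t(t-1)$, so ${\mathbf M}_\pi$ is diagonalisable with spectrum contained in $\{0,1\}$; its $1$-eigenspace is $\{{\vec v}: {\mathbf M}_\pi{\vec v}={\vec v}\}=\operatorname{range}{\mathbf M}_\pi={\vec V}^{\mathfrak G}$, of dimension $m$, and its $0$-eigenspace is $\ker{\mathbf M}_\pi$, of dimension $n-m$ by the rank-nullity relation. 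In particular ${\vec V}={\vec V}^{\mathfrak G}\oplus\ker{\mathbf M}_\pi$.

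Next I would carry out exactly the change of basis set up in the excerpt: take ${\vec v}_1,\dots,{\vec v}_m$ a basis of ${\vec V}^{\mathfrak G}$ and ${\vec v}_{m+1},\dots,{\vec v}_n$ a basis of $\ker{\mathbf M}_\pi$; by the direct-sum decomposition this list is a basis of ${\vec V}$, and the matrix of ${\mathbf M}_\pi$ with respect to it is the block matrix $\tilde{\mathbf M}_\pi$ of (\ref{eqn:avgreptil}), carrying $m$ ones and $n-m$ zeros on its diagonal, so $\Tr(\tilde{\mathbf M}_\pi)=m$. Finally, since the trace is a similarity invariant, $\Tr({\mathbf M}_\pi)=\Tr(\tilde{\mathbf M}_\pi)=m=\dim{\vec V}^{\mathfrak G}$, which is the assertion.

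There is no serious obstacle here: the only real content is the idempotency of ${\mathbf M}_\pi$, which is already available, and the rest is bookkeeping. If one wished to avoid the explicit diagonalisation, the quickest alternative is the identity ``trace of an idempotent equals its rank'', giving $\Tr({\mathbf M}_\pi)=\operatorname{rank}({\mathbf M}_\pi)=\dim(\operatorname{range}{\mathbf M}_\pi)=\dim{\vec V}^{\mathfrak G}=m$ directly. The one mild point to check is that the constructed list ${\vec v}_1,\dots,{\vec v}_n$ really is a basis, i.e.\ that the sum ${\vec V}^{\mathfrak G}+\ker{\mathbf M}_\pi$ is direct and exhausts ${\vec V}$; this holds because $I-{\mathbf M}_\pi$ is the complementary projection, with range $\ker{\mathbf M}_\pi$ and kernel ${\vec V}^{\mathfrak G}$.
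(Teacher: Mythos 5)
Your proof is correct and follows essentially the same route as the paper: idempotency of ${\mathbf M}_\pi$, the change of basis adapted to ${\vec V}^{\mathfrak G}\oplus\ker{\mathbf M}_\pi$ yielding the block form $\tilde{\mathbf M}_\pi$, and invariance of the trace under similarity. Your added verification that the list ${\vec v}_1,\dots,{\vec v}_n$ is genuinely a basis (via the complementary projection $I-{\mathbf M}_\pi$) is a small but welcome tightening of a step the paper leaves implicit.
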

The proof is immediate from taking the trace of (\ref{eqn:avgreptil}), but is a standard result in representation theory, see for example Fulton and Harris~\cite{fulton-harris}[Prop. 2.8, pg. 15-16].

\bibliographystyle{plainurl}
\bibliography{paperbib}

\end{document}